\def\subsection{\@startsection{subsection}{2}%
  \z@{.5\linespacing\@plus.7\linespacing}{.5\linespacing}%
  {\normalfont\bfseries}}
\def\@defaultbiblabelstyle#1{[#1]}
\def\@setauthors{%
  \begingroup
  \def\thanks{\protect\thanks@warning}%
  \trivlist
  \centering\footnotesize \@topsep30\p@\relax
  \advance\@topsep by -\baselineskip
  \item\relax
  \author@andify\authors
  \def\\{\protect\linebreak}%
  \authors%
  \ifx\@empty\contribs
  \else
    ,\penalty-3 \space \@setcontribs
    \@closetoccontribs
  \fi
  \endtrivlist
  \endgroup
}
\def\@settitle{\begin{center}%
  \baselineskip14\p@\relax
    \bfseries
  \@title
  \end{center}%
}
\renewcommand*{\backrefalt}[4]{%
  \ifcase #1 %
  \else
    {↑#2}%
  \fi}
\newtheorem{theorem}{Theorem}[section]
\newtheorem{lemma}[theorem]{Lemma}
\newtheorem{proposition}[theorem]{Proposition}
\newtheorem{corollary}[theorem]{Corollary}
\theoremstyle{definition}
\newtheorem{definition}[theorem]{Definition}
\newtheorem{example}[theorem]{Example}
\newtheorem{notation}[theorem]{Notation}
\newtheorem{convention}[theorem]{Convention}
\theoremstyle{remark}
\newtheorem{remark}[theorem]{Remark}
\numberwithin{equation}{section}
\newcommand\bC{\mathbb{C}}
\newcommand\bN{\mathbb{N}}
\newcommand\bZ{\mathbb{Z}}
\newcommand\cC{\mathcal{C}}
\newcommand\cH{\mathcal{H}}
\newcommand\cJ{\mathcal{J}}
\newcommand\sfA{\mathsf{A}}
\newcommand\nT{\widetilde{T}}
\newcommand\tld{\widetilde}
\newcommand\lrle{\underset {\mathrm{LR}} {\leqslant}}
\newcommand\lrsim{\underset{\mathrm{LR}}{\sim}}
\newcommand\af{\boldsymbol{a}}
\newcommand\hq[1][1]{q^{ \frac{#1}{2}}}
\newcommand\mhq[1][1]{q^{- \frac{#1}{2}}}
\newcommand\AC{{\mathbb{C}[q^{\pm \frac{1}{2}}]}}
\newcommand\II{\mathds{1}}
\newcommand\abs[1]{\lvert #1 \rvert}
\begin{document}

\title[Representations of \texorpdfstring{$\af$}{a}-function value 1]{Representations of Coxeter groups of Lusztig's \texorpdfstring{$\af$}{a}-function value 1}

\author{Hongsheng Hu}
\address{School of Mathematics, Hunan University, Changsha 410082, China}
\email{huhongsheng(at)hnu(dot)edu(dot)cn}

\subjclass[2020]{Primary 20C08; Secondary 20C15, 20F55}

\keywords{Coxeter groups, Lusztig's function $\af$, second-highest two-sided cell, cell representations, simply laced Coxeter groups}

\date{\today}

\begin{abstract}
In this paper, we give a characterization of Coxeter group representations of Lusztig's $\af$-function value $1$, then determine all the irreducible such representations for certain simply laced Coxeter groups.
\end{abstract}

\maketitle

\setcounter{tocdepth}{3}
\tableofcontents

\section{Introduction} \label{sec-intro}

In the seminal paper \cite{KL79}, D.\ Kazhdan and G.\ Lusztig introduced the notion of (left, right, two-sided) cells for an arbitrary Coxeter group $(W,S)$.
Each cell gives a representation of the corresponding Hecke algebra (and the Coxeter group, after specialization), called a cell representation.
Cell representations have been proved to be of great significance in representation theory.

If $(W,S)$ is irreducible, that is, the Coxeter graph is connected, there is a two-sided cell, first formulated in \cite{Lusztig83-intrep}, consisting of non-unity elements of $W$ whose reduced expression is unique.
This two-sided cell, which we denote by $\cC_1$, is the second-highest with respect to the cell order.
In another remarkable paper \cite{Lusztig85-cell-i}, Lusztig defined a function $\af$ on $W$, which is a very useful tool for studying cells and representations.
The function $\af$ takes constant value on each two-sided cell.
In particular, the $\af$-function value is $1$ on $\cC_1$.
In fact, it is known that $\cC_1$ is characterized as the set of elements with $\af$-function value $1$ (see Lemma \ref{lem-C1}).

It is a long-standing conjecture that the function $\af$ is uniformly bounded on $W$ whenever $(W,S)$ is of finite rank, that is, $\abs{S} < \infty$, and an explicit upper bound $N(W)$ is proposed (see \cite[Question 1.13(iv)]{Xi94} and \cite[Conjecture 13.4]{Lusztig14-hecke-unequal}).
Recently, the boundedness of the function $\af$ is proved in \cite{Chen25} (with a bound possibly greater than $N(W)$).
Given the boundedness property, it is well known that any irreducible representation of $W$ (or of the corresponding Hecke algebra) is a quotient of some cell representation provided by a unique two-sided cell (see Proposition \ref{prop-cell-rep-m}).
As a byproduct, we prove the existence of the lowest two-sided cell in $W$ (see Corollary \ref{cor-cell-low}, and also \cite[Theorem 1.5]{Xi12}), answering the question \cite[Question 1.13(ii)]{Xi94}.

Following Lusztig \cite{Lusztig87-cell-iii}, we may also define an $\af$-function value for an arbitrary representation (see Definition \ref{def-a}).
In particular, if this representation is irreducible, its $\af$-function value equals the $\af$-function value on the corresponding two-sided cell.
The geometric representation of $W$ defined in \cite[Ch.\ V, \S 4]{Bourbaki-Lie456} is of $\af$-function value $1$ (see also \cite[Chapter 12]{Bonnafe} and \cite{Douglass90}).

The main aim of this paper is to study the representations of $W$ of $\af$-function value $1$.
In general, the cell representation provided by the two-sided cell $\cC_1$ is too ``big''.
It possibly admits infinite many non-isomorphic simple quotients (see, for example, Remark \ref{rmk-a=1}\eqref{rmk-a=1-2} and Remark \ref{rmk-simply-laced}\eqref{rmk-simply-laced-1}).
Moreover, it possibly admits simple quotients of infinite dimension as well (see Remark \ref{rmk-a=1}\eqref{rmk-a=1-3}).

The first main result in this paper (Theorem \ref{thm-A1}) is a characterization of representations of $W$ of $\af$-function value $1$.
It states that a nonzero and nontrivial representation of $W$ is of $\af$-function value $1$ if and only if there is no common eigenvector with eigenvalue $-1$ for any pair of defining generators $r,t \in S$ whenever the order $m_{rt}$ of $rt$ is finite.
In particular, all of the generalized geometric representations  in the sense of \cite{Hu23} are of $\af$-function value $1$.
In that paper \cite{Hu23}, all the generalized geometric representations are constructed and classified.
Some other evidences suggest that the $\af$-function value of a representation might be related in some implicit manner to the common eigenvectors with eigenvalue $-1$ of the defining generators (see Remark \ref{rmk-a=1}\eqref{rmk-a=1-5}).

Our second main result (Theorem \ref{thm-A1=R}) is an explicit description and classification of irreducible representations of $\af$-function value $1$ for irreducible simply laced Coxeter groups whose Coxeter graph is a tree or admits only one cycle.
In other words, for these Coxeter groups we are able to find all the simple quotients of the cell representation provided by $\cC_1$.
It turns out that such representations are reflection representations (that is, on the representation space each generator in $S$ acts by an abstract reflection) satisfying some specific conditions.
These representations, called R-representations in  \cite{Hu23}, are closely related to the generalized geometric representations, and they can be described explicitly.
For the sake of convenience, we present briefly the concrete construction and the classification of irreducible R-representations in our specified settings in Subsection \ref{subsec-R}.
In particular, all of these representations are finite dimensional.
However, for other irreducible simply laced Coxeter groups, there exist infinite dimensional irreducible representations of $\af$-function value $1$ (see Remark \ref{rmk-simply-laced}\eqref{rmk-simply-laced-2}).
It seems difficult to classify irreducible representations of $\af$-function value 1 for such $W$.
See also \cite{DPWX22, Xu19} for relevant results for the corresponding based ring $J_{\cC_1}$.

The rest of this paper is organized as follows.
In Section \ref{sec-pre}, we recollect briefly some preliminary knowledge on dihedral groups and Kazhdan-Lusztig cells.
In Section \ref{sec-main-1}, we give the characterization of Coxeter group representations of $\af$-function value $1$.
In Section \ref{sec-main-2}, we focus on certain simply laced Coxeter groups and determine all of their irreducible representations of $\af$-function value $1$.

\subsection*{Acknowledgments}
The author would like to thank Professor Nanhua Xi for his patient guidance and insightful discussions.
The author is also deeply grateful to Professor Si'an Nie for pointing out an error in a previous version of the proof of Theorem \ref{thm-A1=R}.
The author is supported by the Fundamental Research Funds for the Central Universities (no.~531118010972) and the National Natural Science Foundation of China (no.~12501037).

\section{Preliminaries} \label{sec-pre}

We use $e$ to denote the neutral element in a group.
The base field of vector spaces are assumed to be $\bC$ unless otherwise specified.

\subsection{Representations of dihedral groups} \label{subsec-dih}

Let $D_m := \langle r,t \mid r^2 = t^2 = (rt)^m = e \rangle$ be a finite dihedral group ($m \in \bN_{\ge 2}$), and $V_{r,t} := \mathbb{C}\beta_r \oplus \mathbb{C}\beta_t$ be a vector space with formal basis $\{\beta_r, \beta_t\}$.
For natural numbers $k$ satisfying $1 \leq k \leq \frac{m}{2}$, we define the action $\rho_k : D_m \to GL(V_{r,t})$ by
\begin{equation}   \label{eq-Dm-rho-k}
  \begin{aligned}
    r \cdot \beta_r & := - \beta_r, \quad  & r \cdot \beta_t & := \beta_t + 2\cos\frac{k\uppi}{m} \beta_r,\\
    t\cdot \beta_t & := - \beta_t,  & t \cdot \beta_r & := \beta_r + 2\cos\frac{k\uppi}{m} \beta_t.
  \end{aligned}
\end{equation}
Intuitively, $\rho_k$ has a real form spanned by $\beta_r$ and $\beta_t$, and $r,t$ act on the (real) plane by two reflections with respect to two lines with an angle of $\frac{k \uppi}{m}$, see Figure \ref{rho-k}.

\begin{figure}[ht]
    \centering
    \begin{tikzpicture}
      \draw[dashed] (-1.3,0)--(2,0);
      \draw[dashed] (240:1.3)--(60:2);
      \draw[->] (0,0)--(0,1.5);
      \draw[->] (0,0)--(330:1.5);
      \draw (0.3,0) arc (0:60:0.3);
      \draw[<->] ($(1.9,0) + (300:0.2)$) arc (300:420:0.2);
      \draw[<->] ($(60:1.9) + (0.2,0)$) arc (0:120:0.2);
      \node[left] (ar) at (0,1.5) {$\beta_r$};
      \node[below] (at) at (330:1.5) {$\beta_t$};
      \node[right] (r) at (2.1,0) {$r$};
      \node[right] (t) at (63:2.3) {$t$};
      \node[right] (angle) at (0.2,0.3) {$\frac{k\uppi}{m}$};
    \end{tikzpicture}
    \caption{$\rho_k : D_m \to GL(V_{r,t})$}\label{rho-k}
\end{figure}

If $k < \frac{m}{2}$, then $\rho_k$ is irreducible.
If $m$ is even and $k = \frac{m}{2}$, then $\rho_\frac{m}{2} \simeq \varepsilon_r \oplus \varepsilon_t$ splits into a direct sum of two representations of dimension $1$, where
\begin{equation}   \label{eq-epsilon-rt}
    \varepsilon_r: r \mapsto -1, t \mapsto 1; \quad \quad
    \varepsilon_t: r \mapsto 1, t \mapsto -1.
\end{equation}
We denote by $\II$ the trivial representation, and by $\varepsilon$ the sign representation, that is,
\begin{equation}    \label{eq-epsilon}
  \II: r,t \mapsto 1; \quad \quad \varepsilon: r,t \mapsto -1.
\end{equation}

\begin{lemma} [{\cite[\S 5.3]{Serre77}}] \label{lem-Dm-rep}
The following are all the irreducible representations of $D_m$,
\begin{gather*}
    \{\II, \varepsilon\} \cup \{\rho_1, \dots, \rho_{\frac{m-1}{2}}\}, \quad \text{if $m$ is odd}; \\
    \{\II, \varepsilon, \varepsilon_r, \varepsilon_t\} \cup \{\rho_1, \dots, \rho_{\frac{m}{2}-1}\}, \quad \text{if $m$ is even}.
\end{gather*}
These representations are non-isomorphic to each other.
\end{lemma}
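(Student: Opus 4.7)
The plan is to verify this standard lemma by ordinary character theory, as in \cite{Serre77}. First, I would record the characters of all proposed representations. The four one-dimensional characters are immediate from \eqref{eq-epsilon-rt} and \eqref{eq-epsilon}. For each $\rho_k$, the formulas \eqref{eq-Dm-rho-k} show that the matrices of $r$ and $t$ have trace zero, and geometrically (Figure \ref{rho-k}) the product $rt$ acts on the real plane as a rotation by angle $2k\uppi/m$; hence $\chi_{\rho_k}((rt)^j) = 2\cos(2kj\uppi/m)$ and $\chi_{\rho_k}$ vanishes on every reflection $(rt)^j r$.

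Second, I would establish irreducibility of each $\rho_k$ with $1 \le k < m/2$ by computing
\[
\langle \chi_{\rho_k}, \chi_{\rho_k} \rangle = \frac{1}{2m}\sum_{j=0}^{m-1} 4\cos^2(2kj\uppi/m) = 1 + \frac{1}{m}\sum_{j=0}^{m-1}\cos(4kj\uppi/m),
\]
using $2\cos^2\theta = 1 + \cos 2\theta$ and noting that reflections contribute nothing. By the standard roots-of-unity identity, the remaining trigonometric sum vanishes unless $m \mid 2k$, which in the range $1 \le k \le m/2$ occurs only at $k = m/2$; so $\rho_k$ is irreducible whenever $k < m/2$. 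In the exceptional case ($m$ even, $k = m/2$) we have $2\cos(k\uppi/m) = 0$, so each of the lines $\bC\beta_r$ and $\bC\beta_t$ is stable under $r$ and $t$, giving $\rho_{m/2} \simeq \varepsilon_r \oplus \varepsilon_t$.

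Third, I would check pairwise non-isomorphism. The values $\chi_{\rho_k}(rt) = 2\cos(2k\uppi/m)$ separate different $k$ in the allowed range, since $2k\uppi/m \in (0,\uppi)$ and cosine is injective there; the four one-dimensional representations are visibly distinct from one another on $r$ or $t$ and cannot be isomorphic to any two-dimensional $\rho_k$. Finally, completeness of the list follows from the sum-of-squares identity $\sum_\chi (\dim\chi)^2 = |D_m| = 2m$: for $m$ odd, $2 \cdot 1^2 + \frac{m-1}{2} \cdot 2^2 = 2m$, and for $m$ even, $4 \cdot 1^2 + (\frac{m}{2} - 1) \cdot 2^2 = 2m$.

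The only step requiring real care is the trigonometric evaluation in step two, handled by the usual roots-of-unity argument; otherwise the proof is entirely routine, and in any case the result is available by direct citation to \cite{Serre77}.
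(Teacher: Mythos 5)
Your proof is correct and is exactly the standard character-theoretic argument from the reference the paper cites (\cite[\S 5.3]{Serre77}); the paper itself offers no proof beyond that citation. All the computations check out: the traces of $r,t$ vanish, the orthogonality computation correctly isolates the reducible case $k=m/2$, and the sum-of-squares count $2\cdot 1^2+\frac{m-1}{2}\cdot 2^2=2m$ (resp.\ $4\cdot 1^2+(\frac{m}{2}-1)\cdot 2^2=2m$) settles completeness.
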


\begin{remark} \label{rmk-Dm-rep}
  The $-1$-eigenspace of either $r$ or $t$ in the representation space $V_{r,t}$ of $\rho_k$ $(1 \le k \le \frac{m}{2})$ is one dimensional.
\end{remark}


\subsection{The Kazhdan-Lusztig cells and Lusztig's function \texorpdfstring{$\af$}{a}}

Let $(W,S)$ be a Coxeter group with Coxeter matrix $(m_{st})_{s,t \in S}$ and length function $\ell$.
For simplicity, we assume $|S|$ is finite.
Following the seminal papers \cite{KL79, Lusztig85-cell-i}, the \emph{Hecke algebra} $\mathcal{H}$ of $(W,S)$ is defined as follows.
Let $\hq$ be an indeterminate and $\bZ [q^{\pm \frac{1}{2}}]$ be the ring of Laurent polynomials in $\hq$.
Let $\mathcal{H}$ be a free $\bZ [q^{\pm \frac{1}{2}}]$-module with formal basis $\{\nT_w\}_{w \in W}$.
The multiplication on $\mathcal{H}$ is defined by
\begin{equation*}
  \nT_s \nT_w := \begin{cases}
                  \nT_{sw} , & \mbox{if } \ell(sw) > \ell(w) \\
                  (\hq - \mhq) \nT_w + \nT_{sw} , & \mbox{if } \ell(sw) < \ell(w)
                 \end{cases}, \quad \forall s \in S, w \in W.
\end{equation*}
Then, $\mathcal{H}$ is an associative $\bZ [q^{\pm \frac{1}{2}}]$-algebra with unity $\nT_e$.
Here $\nT_w = \mhq[\ell(w)] T_w$ where $T_w$ is the standard basis element in \cite{KL79}.

Besides the normalized standard basis $\{\nT_w \mid w \in W\}$, we have the basis $\{C_w \mid w \in W\}$ in \cite{KL79}, such that
\begin{align}
    C_w & = \sum_{y \in W} (-1)^{\ell(w)+\ell(y)} q^{\frac{1}{2} (\ell(w) - \ell(y))} P_{y,w}(q^{-1}) \nT_y   \label{eq-Cw} \\
     & = \sum_{y \in W} (-1)^{\ell(w)+\ell(y)} q^{-\frac{1}{2} (\ell(w) - \ell(y))} P_{y,w}(q) \nT_{y^{-1}}^{-1}, \notag %
\end{align}
where $P_{y,w} \in \mathbb{Z}[q]$, $P_{y,w} = 0$ unless $y \leq w$, $P_{w,w} = 1$, and $\deg_q P_{y,w} \leq \frac{1}{2} (\ell(w) - \ell(y) - 1)$ if $y < w$.
Here $\le$ is the Bruhat order on $W$, and $y < w$ indicates $y \le w$ and $y \ne w$.



\begin{example} \leavevmode \label{eg-C}
 \begin{enumerate}
   \item \label{eg-C-1} $C_e = \nT_e$; $C_s = \nT_s - \hq$ for $s \in S$.
   \item \label{eg-C-2} If $r, t \in S$ with $r \ne t$ and  $m_{rt} = m < \infty$, we write for simplicity $r_{k} := rtr \cdots$ (product of $k$ factors), $t_k = trt \cdots$ ($k$ factors), and $w_{rt} := r_{m} = t_{m}$. Then, $P_{y, w_{rt}} = 1$ for any $y = r_k$ or $t_k$ ($0 \le k \le m$). Thus,
       \begin{equation} \label{Cwrt}
         C_{w_{rt}}  = \nT_{w_{rt}} - \hq (\nT_{r_{m-1}} + \nT_{t_{m-1}}) + \cdots  
               + (-1)^i \hq[i] (\nT_{r_{m-i}} + \nT_{t_{m-i}}) + \dots + (-1)^m \hq[m].
       \end{equation}
 \end{enumerate}
\end{example}

\begin{notation}
  We keep the notations $r_k, t_k$ and $w_{rt}$ in Example \ref{eg-C}\eqref{eg-C-2} throughout the paper.
\end{notation}

For any $x, y, w \in W$, let $h_{x, y, w} \in \bZ [q^{\pm \frac{1}{2}}]$ be such that $C_x C_y = \sum_{w \in W} h_{x,y,w} C_w$.
Following Lusztig \cite{Lusztig85-cell-i}, the function $\af$ on $W$ is defined by $\af(w) := \min \{i \in \mathbb{N} \mid \hq[i] h_{x,y,w} \in \mathbb{Z}[\hq], \forall x, y \in W\}$.
It turns out that $\af(w)$ is a well-defined natural number (see, for example, \cite[Propsition 1.2]{Lusztig87-cell-ii}).
We say the function $\af$ is \emph{bounded} if there is $N \in \mathbb{N}$ such that $\af(w) \le N$ for any $w \in W$.

  It is a long-standing conjecture that Lusztig's function $\af$ is bounded on any Coxeter group $(W,S)$ of finite rank (that is, $\abs{S} < \infty$).
  Moreover, it is also conjectured that the function $\af$ is bounded by $N(W)$, where $N(W)$ is the maximal length of longest elements of all finite parabolic subgroups of $W$.
  See \cite[Question 1.13(iv)]{Xi94} and \cite[Conjecture 13.4]{Lusztig14-hecke-unequal}.
  The bound was verified in some special cases (see for example \cite{Belolipetsky04, LS19, Lusztig85-cell-i, Xi12, Zhou13}).
  Recently, it is proved in \cite{Chen25} that the function $\af$ admits an upper bound which is possibly greater then $N(W)$:

\begin{theorem}
  Let $(W,S)$ be a Coxeter group of finite rank. Then, there exists a constant $N'(W)$ such that the function $\af$ is bounded by $N'(W)$.
\end{theorem}

For $x, y \in W$, we say $y \lrle x$ if there exist $ H_1, H_2 \in \mathcal{H}$, such that $C_y$ has nonzero coefficient in the expression of $H_1 C_x H_2$ with respect to the basis $\{C_w\}_{w \in W}$.
We say $x \lrsim y$ if $x \lrle y$ and $y \lrle x$.
It turns out that $\lrle$ is a pre-order on $W$, and $\lrsim$ is an equivalence relation on $W$. The equivalence classes are called \emph{two-sided cells}.
The set of two-sided cells forms a partial order set with respect to $\lrle$.

We have the following standard properties of Kazhdan-Lusztig basis $\{C_w\}_{w \in W}$, two-sided cells, and Lusztig's function $\af$.

\begin{proposition} [\cite{Lusztig85-cell-i, Lusztig87-cell-ii, Lusztig14-hecke-unequal}] \label{prop-cell-a}
  Let $s \in S$ and $w, x, y \in W$.
  \begin{enumerate}
    \item \label{eg-C-3} $P_{y,w}(0) = 1$ for any $y \leq w$.
    \item \label{prop-cell-a-1} Suppose $sw > w$.
        Then, $C_sC_w  = C_{sw} + \sum \mu_{y,w} C_y$ where the sum runs over all $y < w$ such that $\deg_q P_{y,w} = \frac{1}{2} (\ell(w) - \ell(y) - 1)$ and $s y < y$, and $\mu_{y,w}$ is the coefficient of the top-degree term in $P_{y,w}$.
        The formula is similar for the case $ws > w$.
        In particular, $sw \lrle w$ if $sw > w$, $ws \lrle w$ if $ws > w$.
    \item \label{prop-cell-a-2} $\af(w) = 0$ if and only if $w = e$.
        We have $\af(s) = 1$ for any $s \in S$.
    \item \label{prop-cell-a-3} If $m_{rt} < \infty$ for some $r, t \in S$ with $r \ne t$, then $\af(w_{rt}) = m_{rt}$. 
    \item \label{prop-cell-a-4} If $y \lrle x$, then $\af(y) \geq \af(x)$.
        If $y \lrsim x$, then $\af(y) = \af(x)$.
    \item \label{prop-cell-a-5} (Given that $\af$ is bounded on $W$.) If $\af(y) = \af(x)$ and $y \lrle x$, then $y \lrsim x$.
  \end{enumerate}
\end{proposition}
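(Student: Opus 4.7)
The plan is to establish each item in order, using the Kazhdan-Lusztig recursion implicit in \eqref{eq-Cw} together with the algebraic structure of $\cH$. For (1), I would induct on $\ell(w) - \ell(y)$: the case $y = w$ is immediate, and for $y < w$ one picks $s \in S$ with $sw < w$ and reads off $P_{y,w}(0)$ from the Kazhdan-Lusztig recursion expressing $P_{y,w}$ in terms of $P_{sy,sw}$, $P_{y,sw}$, and $\mu$-corrections, using the degree bound and the inductive hypothesis. Item (2) is the standard Kazhdan-Lusztig multiplication formula: I would expand $C_s C_w = (\nT_s - \hq) C_w$ via \eqref{eq-Cw} and rewrite in the Kazhdan-Lusztig basis; the uniqueness of that basis combined with matching of top-degree terms then identifies the $\mu_{y,w}$'s as the top coefficients of the relevant $P_{y,w}$'s.

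For (3), $\af(e) = 0$ is immediate from $C_e = \nT_e$. For $w \ne e$, picking $s \in S$ with $sw < w$ and computing directly yields $h_{s,w,w} = \pm(\hq + \mhq)$, so $\af(w) \ge 1$. The upper bound $\af(s) \le 1$ follows from item (2): every nonzero $h_{s,y,w}$ is either $1$, an integer $\mu$-coefficient, or $\pm(\hq + \mhq)$, hence $\hq \cdot h_{s,y,w} \in \bZ[\hq]$ in every case. For (4), I would work inside the parabolic dihedral subgroup $\langle r,t \rangle \cong D_{m_{rt}}$, whose Hecke algebra embeds in $\cH$; using the explicit formula \eqref{Cwrt} one computes $C_{w_{rt}} C_{w_{rt}}$ and extracts the coefficient of $C_{w_{rt}}$, whose top $q$-degree turns out to be exactly $m_{rt}$.

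Item (5) is essentially tautological from the definition: if $C_y$ occurs in $H_1 C_x H_2$ with nonzero coefficient, then every structure constant $h_{x',y,w'}$ is expressible through products of structure constants running through $x$, so the minimum order of pole witnessed at $y$ dominates that at $x$; the equivalence case follows by symmetry.

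The main obstacle, and the only non-formal step, is item (6). Its proof relies on Lusztig's asymptotic ring $J$ of \cite{Lusztig87-cell-ii}, constructed from the leading coefficients $\gamma_{x,y,z^{-1}}$ of $h_{x,y,z}$ at $q$-degree $\af(z)$. Under the boundedness hypothesis, $J$ is a well-defined associative ring with unity whose two-sided decomposition mirrors the cell structure of $\cH$. Using the positivity and non-degeneracy of the $\gamma$'s, one shows that $y \lrle x$ together with $\af(y) = \af(x)$ forces $x$ to appear with maximal $q$-degree in the relevant $J$-products, whence $x \lrle y$ and hence $x \lrsim y$.
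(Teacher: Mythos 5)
The paper does not actually prove this proposition: it is quoted as a package of standard facts with references to \cite{Lusztig85-cell-i, Lusztig87-cell-ii, Lusztig14-hecke-unequal}, so there is no in-paper argument to compare yours against. Judged on its own, your sketch is fine for items (1) and (2) (the standard inductions built on the Kazhdan--Lusztig recursion) and acceptable for item (6) (deferring to the asymptotic ring of \cite{Lusztig87-cell-ii}), but it contains concrete gaps in items (3), (4) and, most seriously, (5).

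In item (3) you bound $\af(s)$ by inspecting the structure constants $h_{s,y,w}$ of the products $C_sC_y$. These are the wrong constants: by definition $\af(s)=\min\{i:\hq[i]h_{x,y,s}\in\bZ[\hq]\ \forall x,y\in W\}$, so one must control the coefficient of $C_s$ in an \emph{arbitrary} product $C_xC_y$, about which item (2) says nothing. The upper bound $\af(s)\le 1$ (and likewise $\af(w_{rt})\le m_{rt}$ in item (4)) requires a uniform degree estimate on $h_{x,y,z}$ over all $x,y\in W$ --- the same estimate that makes $\af$ well defined in the first place --- and cannot be read off from the particular products you compute; your dihedral computation of $C_{w_{rt}}C_{w_{rt}}$ only yields the lower bound $\af(w_{rt})\ge m_{rt}$. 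More seriously, item (5) is not ``essentially tautological.'' Your claim that ``the minimum order of pole witnessed at $y$ dominates that at $x$'' tacitly assumes that, when $H_1C_xH_2$ is expanded and structure constants are composed via associativity, the deep pole at $x$ cannot cancel before it is witnessed at $y$. Ruling out such cancellation is precisely where the positivity $(-1)^ic_i\in\bN$ of Lemma \ref{lemma-pos} (Elias--Williamson in general; intersection cohomology in the crystallographic case) enters, and it is the deepest input into the whole proposition. As written, your argument for (5) would equally ``prove'' monotonicity of $\af$ along $\lrle$ in a hypothetical setting without positivity, where it is not known to hold.
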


\begin{notation}
   By Proposition \ref{prop-cell-a}\eqref{prop-cell-a-4}, we may write $\af(\mathcal{C})$ for a two-sided cell $\mathcal{C}$ to indicate the number $\af(x)$ where $x$ is an arbitrary element in $\cC$.
   We also write $\mathcal{C} \lrle w$ to indicate that $x \lrle w$ for some $x \in \cC$.
   Similar for $w \lrle \cC$ and $\cC^\prime \lrle \cC$.
\end{notation}

We shall need the following result due to H. Matsumoto and J. Tits.
See also \cite[Theorem 1.9]{Lusztig14-hecke-unequal}.

\begin{theorem} [\cite{Matsumoto64, Tits69}] \label{lemma-two-expr}
  Let $s_1 \cdots s_n$ and $s_1^\prime \cdots s_n^\prime$ be two reduced expressions of some element $w \in W$.
  Then, we can obtain one of the expressions from the other by finite steps of replacement of the form
    \begin{equation*}
          \underbrace{rtr\cdots} _{m_{rt} \text{ factors}} = \underbrace{trt\cdots} _{m_{rt} \text{ factors}} \quad (r,t \in S, r \ne t, m_{rt} < \infty).
    \end{equation*}
\end{theorem}

If the Coxeter group $(W,S)$ is irreducible (that is, the Coxeter graph is connected), there is a two-sided cell with a simple description, formulated by Lusztig \cite{Lusztig83-intrep} as follows.

\begin{lemma} [See also {\cite[Proposition 3.8]{Lusztig83-intrep}}] \leavevmode \label{lem-C1}
  \begin{enumerate}
    \item \label{lem-C1-1} For $w \in W \setminus \{e\}$, $\af(w) = 1$ if and only if $w$ has a unique reduced expression.
    \item \label{lem-C1-2} Let $\mathcal{C}_1 := \{w \in W \mid \af(w) = 1\}$. If $(W,S)$ is irreducible, then $\mathcal{C}_1$ is a two-sided cell. Moreover, for any $x \in W \setminus \{e\}$, we have $x \lrle \mathcal{C}_1$.
  \end{enumerate}
\end{lemma}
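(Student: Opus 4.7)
The plan is to prove part \ref{lem-C1-1} by induction on length, then deduce part \ref{lem-C1-2} using irreducibility. Throughout, the key translation, provided by Matsumoto-Tits (Lemma \ref{lemma-two-expr}), is that $w$ has more than one reduced expression if and only if some reduced expression of $w$ contains a ``dihedral block'' $w_{rt}$, i.e., $w = x w_{rt} y$ with $\ell(w) = \ell(x) + m_{rt} + \ell(y)$ for some $r \ne t$ with $m_{rt} < \infty$.

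For the ($\Rightarrow$) direction of \ref{lem-C1-1} I would argue the contrapositive. Given such a dihedral block, iterating Proposition \ref{prop-cell-a}\ref{prop-cell-a-1} yields $C_u C_v = C_{uv} + (\text{lower cell terms})$ whenever $\ell(uv) = \ell(u) + \ell(v)$, so $C_w$ appears with nonzero coefficient in $C_x C_{w_{rt}} C_y$. This gives $w \lrle w_{rt}$, whence $\af(w) \ge \af(w_{rt}) = m_{rt} \ge 2$ by Proposition \ref{prop-cell-a}\ref{prop-cell-a-3}, \ref{prop-cell-a-4}. For the ($\Leftarrow$) direction, I would induct on $\ell(w)$; the base case $w \in S$ is Proposition \ref{prop-cell-a}\ref{prop-cell-a-2}. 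For the step, write $w = s_1 w'$ with $\ell(w') = \ell(w) - 1$; uniqueness of the reduced expression of $w$ restricts to uniqueness for $w'$, and the first two letters $s_1, s_2$ of $w$ must satisfy $m_{s_1 s_2} \ge 3$ (else swapping them produces a second reduced expression). I will prove $w \lrsim w'$, yielding $\af(w) = \af(w') = 1$ by induction and Proposition \ref{prop-cell-a}\ref{prop-cell-a-4}. One direction, $w \lrle w'$, follows from Proposition \ref{prop-cell-a}\ref{prop-cell-a-1} via $C_w$ appearing in $C_{s_1} C_{w'}$. For the reverse $w' \lrle w$, I use that $s_2$ is not a left descent of $w$ (since the unique expression starts $s_1 s_2$ with $m_{s_1 s_2} \ge 3$), so $s_2 w > w$ and Proposition \ref{prop-cell-a}\ref{prop-cell-a-1} gives $C_{s_2} C_w = C_{s_2 w} + \sum_{y < w,\, s_2 y < y} \mu_{y,w} C_y$. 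Since $\ell(w) - \ell(w') - 1 = 0$ forces $P_{w',w}$ to be a constant, and that constant is $1$ by Proposition \ref{prop-cell-a}\ref{eg-C-3}, the coefficient $\mu_{w',w}$ equals $1$, so $C_{w'}$ appears in $C_{s_2} C_w$ as required.

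For part \ref{lem-C1-2}, first note that $\cC_1$ is a union of two-sided cells by Proposition \ref{prop-cell-a}\ref{prop-cell-a-4}. Iterating the step above, every $w \in \cC_1$ with unique reduced expression $s_1 \cdots s_n$ satisfies $w \lrsim s_n$, so it suffices to show all members of $S$ are mutually $\lrsim$-equivalent. For $s \ne s' \in S$ with $m_{ss'} \ge 3$, the element $ss'$ lies in $\cC_1$ and is $\lrsim$-equivalent to both $s$ and $s'$ by the same step, giving $s \lrsim s'$. Irreducibility of $(W,S)$ ensures the Coxeter graph is connected, so any two members of $S$ are joined by such a chain, and $\cC_1$ is a single cell. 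The ``moreover'' clause follows: for $x \ne e$ with reduced expression $s_1 \cdots s_n$, iterating Proposition \ref{prop-cell-a}\ref{prop-cell-a-1} gives $x \lrle s_n \in \cC_1$. I expect the main technical point to be the verification $\mu_{w',w}=1$, which hinges on uniqueness of the reduced expression forcing $m_{s_1 s_2} \ge 3$ (hence $s_2 w > w$), together with the minimal-degree analysis of the Kazhdan-Lusztig polynomial $P_{w',w}$.
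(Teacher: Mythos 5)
Your proposal is correct, and for the forward implication of part (1) it coincides with the paper's argument (Matsumoto--Tits produces a dihedral block $w_{rt}$, iterating Proposition \ref{prop-cell-a} \ref{prop-cell-a-1} gives $w \lrle w_{rt}$, and $\af(w_{rt}) = m_{rt} \ge 2$). Where you genuinely diverge is in the converse direction and in part (2): the paper simply cites Lusztig's Proposition 3.8 for the facts that $w \lrsim s$ when $w$ has a unique reduced expression with $sw < w$, and that $\cC_1$ is a single two-sided cell. You instead prove these from scratch: writing $w = s_1 w'$, you get $w \lrle w'$ from $C_{s_1}C_{w'}$, and for the reverse inequality you observe that $s_2$ is not a left descent of $w$ (uniqueness of the reduced expression forces this, since $s_1 \ne s_2$), so $C_{s_2}C_w$ contains $C_{w'}$ with coefficient $\mu_{w',w} = P_{w',w}(0) = 1$ because $\ell(w)-\ell(w')=1$ forces $P_{w',w}$ to be the constant $1$. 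Iterating gives $w \lrsim s_n$, and the left/right-symmetric version applied to $ss'$ for adjacent generators, together with connectivity of the Coxeter graph, shows all of $S$ (hence all of $\cC_1$) lies in one cell. This is essentially the internal content of the result the paper outsources to \cite{Lusztig83-intrep}, so your write-up is self-contained where the paper is not; the only cost is length, and the only step deserving a full sentence of justification in a final version is the routine verification that the coefficient of $C_{xw_{rt}y}$ in $C_{s_1}\cdots C_{s_k} C_{w_{rt}} C_{t_1}\cdots C_{t_l}$ is nonzero (it is the unique maximal-length term, with coefficient $1$), which both you and the paper treat as immediate.
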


\begin{proof}
  If $w$ has two different reduced expressions, then by Theorem~\ref{lemma-two-expr} $w$ has a reduced expression of the form
  \begin{equation*}
    w = \cdots \underbrace{(rtr\cdots)} _{m_{rt} \text{ factors}} \cdots \quad (r,t \in S, r \ne t, m_{rt} < \infty).
  \end{equation*}
  Then, $w \lrle w_{rt}$ by Proposition \ref{prop-cell-a}\eqref{prop-cell-a-1}.
  Thus, $\af(w) \ge \af(w_{rt}) = m_{rt} \ge 2$ by Proposition \ref{prop-cell-a}\eqref{prop-cell-a-3}\eqref{prop-cell-a-4}.
  Conversely, suppose that the reduced expression of $w$ is unique and $sw < w$, $s\in S$.
  We have $w \lrsim s$ by \cite[Proposition 3.8]{Lusztig83-intrep}.
  Therefore, $\af(w) = 1$ by Proposition \ref{prop-cell-a}\eqref{prop-cell-a-2}\eqref{prop-cell-a-4}.
  The point \eqref{lem-C1-1} of this lemma is proved.

  The first assertion in the point \eqref{lem-C1-2} has been proved in \cite[Proposition 3.8]{Lusztig83-intrep}.
  The second one is deduced form Proposition \ref{prop-cell-a}\eqref{prop-cell-a-1} and the fact that $s \in \cC_1$.
\end{proof}

\begin{remark}
  A proof of Lemma \ref{lem-C1}\eqref{lem-C1-1} is also provided in \cite[Corollary 3.1]{Xu19} which uses the boundedness of the function $\af$ (the proof there uses the result stated in Proposition \ref{prop-cell-a}\eqref{prop-cell-a-5}).
\end{remark}

In general, if we decompose the Coxeter graph into connected components, say $S = \sqcup_i S_i$, then $\mathcal{C}_1 = \sqcup_i \mathcal{C}_{1,i}$ is a union of two-sided cells, where $\mathcal{C}_{1,i} = \{w \in W \mid \af(w) = 1, \text{ and $w$ is a product of elements in $S_i$}\}$.

The following result (by B.\ Elias and G.\ Williamson \cite{EW14}) is a milestone in this subject and its proof is highly nontrivial.

\begin{theorem} [{\cite[Corollary 1.2]{EW14}}] \label{lemma-pos}
  For any $x,y,w \in W$, it holds $P_{y,w} \in \mathbb{N}[q]$.
  If we write $h_{x,y,w} = \sum_{i \in \mathbb{Z}} c_i \hq[i]$ where each $c_i \in \bZ$, then $(-1)^i c_i \in \mathbb{N}$.
\end{theorem}

\subsection{The cell representations}

We will consider complex representations of Coxeter groups.
For convenience, we define $\cH_\bC := \cH \otimes_\bZ \bC$ to be the Hecke algebra with coefficient ring $\AC$.
By convention, a \emph{representation} (over $\bC$) of $\cH_\bC$ is defined to be a $\mathbb{C}$-vector space endowed with an $\mathcal{H}_\bC$-module structure, where $\hq$ acts by a $\mathbb{C}^\times$-scalar multiplication.

Let $\chi: \AC \to \bC$ be a homomorphism of $\bC$-algebras (called a \emph{specialization}) and $\mathcal{C}$ be a two-sided cell.
We define $\mathcal{J}_\mathcal{C}$ to be a $\bC$-vector space with formal basis $\{J_w \mid w \in \mathcal{C}\}$.
The following formula defines an $\mathcal{H}_\bC$-module structure on $\mathcal{J}_\mathcal{C}$,
\begin{equation*}
  C_x \cdot J_y := \sum_{w \in \mathcal{C}} \chi(h_{x,y,w}) J_w, \quad \forall x \in W, y \in \mathcal{C}.
\end{equation*}
This representation of $\cH_\bC$ is called the \emph{cell representation provided by} $\cC$. We denote it also by $\cJ_\cC^\chi$ if the specialization $\chi$ needs to be emphasized.

\begin{remark} \label{rmk-cell-rep}
  The cell representation $\cJ_\cC$ has the following property.
  If $C_x \cdot \cJ_\cC \ne 0$, then there exists $y \in \cC$ such that $h_{x, y, w} \ne 0$ for some $w \in \cC$, and consequently $\cC \lrle x$.
  In particular, $\af(x) \le \af(\cC)$.
\end{remark}

The following proposition is due to Lusztig.
But a sketched proof is presented here because the author did not find an explicit statement in the literature.

\begin{proposition} [Lusztig, 1980's] \label{prop-cell-rep-m} Suppose $V$ is a nonzero irreducible representation of $\mathcal{H}_\bC$ on which $\hq$ acts by a scalar $\chi(\hq)$.
  \begin{enumerate}
    \item \label{prop-cell-rep-m1} Suppose there exists a two-sided cell $\cC$ such that
        \begin{enumerate}
          \item \label{prop-cell-rep-i} for any $x \in W$, if $C_x \cdot V \neq 0$, then we have $\mathcal{C} \lrle x$,
          \item \label{prop-cell-rep-ii} there is an element $w \in \mathcal{C}$ such that $C_w \cdot V \neq 0$.
        \end{enumerate}
        Then, such a two-sided cell is unique.
        Moreover, $V$ is a simple quotient of $\mathcal{J}_\mathcal{C}^\chi$.
    \item \label{prop-cell-rep-m3} (Given that the function $\af$ is bounded on $W$.)
        There exists a two-sided cell $\cC$ satisfying the conditions \eqref{prop-cell-rep-i}\eqref{prop-cell-rep-ii} in \eqref{prop-cell-rep-m1}.
  \end{enumerate}
\end{proposition}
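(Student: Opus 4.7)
\emph{Part \ref{prop-cell-rep-m1}.} Uniqueness is immediate: if $\mathcal C$ and $\mathcal C'$ both satisfy the hypotheses, pick $w\in\mathcal C$ with $C_wV\ne0$ via \ref{prop-cell-rep-ii} for $\mathcal C$; then \ref{prop-cell-rep-i} for $\mathcal C'$ gives $\mathcal C'\lrle w$, hence $\mathcal C'\lrle\mathcal C$, and by symmetry $\mathcal C=\mathcal C'$. For the quotient statement, I would introduce the two-sided ideals
\[
  I_{\le\mathcal C}:=\mathrm{span}\{C_x\mid x\lrle\mathcal C\},\qquad I_{<\mathcal C}:=\mathrm{span}\{C_x\mid x\lrle\mathcal C,\,x\notin\mathcal C\}
\]
of $\mathcal H_\bC$; these are genuine two-sided ideals because by definition $h\cdot C_x$ expands into $C_z$'s with $z\lrle x$, and $I_{\le\mathcal C}/I_{<\mathcal C}\cong\mathcal J_\mathcal C^\chi$ as $\mathcal H_\bC$-modules. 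Condition \ref{prop-cell-rep-i} forces $I_{<\mathcal C}V=0$: for $x\lrle\mathcal C$ with $x\notin\mathcal C$, the relation $\mathcal C\lrle x$ would combine with $x\lrle\mathcal C$ to yield $x\lrsim\mathcal C$, i.e.\ $x\in\mathcal C$, a contradiction, so $C_xV=0$ by \ref{prop-cell-rep-i}. Irreducibility of $V$ combined with \ref{prop-cell-rep-ii} forces $I_{\le\mathcal C}V=V$. Picking $v_0\in V$ with $C_wv_0\ne0$, the left $\mathcal H_\bC$-linear map $I_{\le\mathcal C}\to V$, $h\mapsto hv_0$, annihilates $I_{<\mathcal C}$ and descends to a nonzero---and hence, by irreducibility, surjective---$\mathcal H_\bC$-map $\mathcal J_\mathcal C^\chi\to V$.

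\emph{Part \ref{prop-cell-rep-m3}.} Boundedness of $\af$ licenses the decreasing filtration $I_{\ge i}:=\mathrm{span}\{C_x\mid\af(x)\ge i\}$ by two-sided ideals (using Proposition \ref{prop-cell-a} \ref{prop-cell-a-1} and \ref{prop-cell-a-4}), which terminates at $I_{\ge N+1}=0$ for $N$ an $\af$-bound. Each $I_{\ge i}V$ is a submodule, so by irreducibility there is a largest $i_0$ with $I_{\ge i_0}V=V$; for $i>i_0$ it vanishes, forcing $C_xV\ne0\Rightarrow\af(x)\le i_0$. The key input is Proposition \ref{prop-cell-a} \ref{prop-cell-a-5}: cells of equal $\af$-value are pairwise incomparable under $\lrle$. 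Consequently, if $x,y$ lie in distinct cells of $\af$-value $i_0$, any $C_z$ appearing in $C_xC_y$ with $\af(z)=i_0$ would have to satisfy $z\lrsim x\lrsim y$, a contradiction, so $C_xC_y\in I_{\ge i_0+1}$. This yields a bimodule decomposition $I_{\ge i_0}/I_{\ge i_0+1}\cong\bigoplus_{\af(\mathcal C)=i_0}\bar I_\mathcal C$ into cell-ideals. From $V=I_{\ge i_0}V=\sum_{\mathcal C}I_\mathcal C V$ (a sum of submodules), irreducibility forces $I_\mathcal C V=V$ for some $\mathcal C$ with $\af(\mathcal C)=i_0$, which immediately yields \ref{prop-cell-rep-ii}. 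For \ref{prop-cell-rep-i}, given $y$ with $C_yv_0\ne0$, I would write $v_0=\sum_iC_{x_i}v_i$ with $x_i\in\mathcal C$ (using $V=I_\mathcal C V$) and expand $C_yC_{x_i}=\sum_zh_{y,x_i,z}C_z$; only terms with $\af(z)=i_0$ can contribute to $C_yv_0$, and for such $z$ the relations $z\lrle x_i$ and $\af(z)=\af(x_i)$ force $z\in\mathcal C$ by Proposition \ref{prop-cell-a} \ref{prop-cell-a-5}, while also $z\lrle y$, delivering $\mathcal C\lrle y$.

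\emph{Main obstacle.} The delicate step is the cell-orthogonality modulo $I_{\ge i_0+1}$: it rests essentially on Proposition \ref{prop-cell-a} \ref{prop-cell-a-5} and thereby on the boundedness of $\af$; without it, the ideal $I_{\ge i_0}/I_{\ge i_0+1}$ would fail to split into single-cell pieces, and one could not isolate a unique responsible cell $\mathcal C$ for $V$.
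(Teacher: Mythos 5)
Your proposal is correct and follows essentially the same route as the paper: uniqueness by playing condition (i) for one cell against condition (ii) for the other, realizing $V$ as a quotient of $\mathcal{J}_\mathcal{C}^\chi$ via $J_y \mapsto C_y \cdot v_0$, and in part (2) selecting the cell of maximal $\af$-value acting nontrivially (your $i_0$ is exactly that maximum) and invoking Proposition \ref{prop-cell-a} \ref{prop-cell-a-5} to kill everything strictly below $\mathcal{C}$. The only difference is presentational: you make explicit, via the ideal filtrations $I_{\le\mathcal C}\supseteq I_{<\mathcal C}$ and $I_{\ge i}$, the verifications that the paper compresses into ``it can be verified that $\varphi$ is a well-defined surjective homomorphism'' and ``by the same arguments.''
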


\begin{proof}
  For \eqref{prop-cell-rep-m1}, suppose there exists another two-sided cell $\cC^\prime$ satisfying the conditions \eqref{prop-cell-rep-i}\eqref{prop-cell-rep-ii}.
  Then, by the condition \eqref{prop-cell-rep-i} for $\cC$ and the condition \eqref{prop-cell-rep-ii} for $\cC^\prime$, we deduce that $\cC \lrle \cC^\prime$, and vice versa.
  Thus, $\cC = \cC^\prime$ and the two-sided cell $\cC$ is unique.
  Next we take $v \in V$ such that $C_w \cdot v \neq 0$ for some $w \in \mathcal{C}$.
  Then, it can be verified that the map $\varphi: \mathcal{J}_\mathcal{C}^\chi \to V$, $J_y \mapsto C_y \cdot v$ is a well-defined surjective homomorphism of $\mathcal{H}_\bC$-modules.
  Thus, $V \simeq \mathcal{J}_\mathcal{C}^\chi / \ker \varphi$ is a quotient of $\cJ_\cC^\chi$.

  For \eqref{prop-cell-rep-m3}, let $\mathcal{C}$ be a two-sided cell satisfying the condition \eqref{prop-cell-rep-ii} such that $\af(\cC)$ is maximal (such a two-sided cell exists since $\af$ is  bounded and $C_e = \nT_e$ acts by identity).
  Then, by Proposition \ref{prop-cell-a}\eqref{prop-cell-a-5}, we have
  \begin{equation*}
    \text{for any $y \in W$, if $y \lrle \mathcal{C}$ and $y \notin \mathcal{C}$, then $C_y \cdot V = 0$.}
  \end{equation*}
  Then, $V$ is a quotient of $\cJ_\cC^\chi$ by the same arguments in the proof of  \eqref{prop-cell-rep-m1}.
  Suppose now $x \in W$ and $C_x \cdot V \neq 0$.
  Then, $C_x \cdot \cJ_\cC^\chi \neq 0$ and hence there exists $y \in \mathcal{C}$ such that $C_x \cdot J_y \neq 0$.
  In other words, there exist $y, z \in \mathcal{C}$ such that $h_{x,y,z} \neq 0$.
  Thus, $z \lrle x$ and the cell $\mathcal{C}$ satisfies the statement \eqref{prop-cell-rep-i}.
\end{proof}

Immediately we have the following corollary on the lowest two-sided cell, answering the question \cite[Question 1.13(ii)]{Xi94}.

\begin{corollary} \label{cor-cell-low}
  Suppose $N = \max \{\af(w) \mid w \in W\} <\infty$.
  Then, $\mathcal{C}_{\text{\upshape low}} := \{w \in W \mid \af(w) = N\}$ is a two-sided cell. Moreover, for any $w \in W$ we have $\mathcal{C}_{\text{\upshape low}} \lrle w$.
\end{corollary}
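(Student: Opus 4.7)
The plan is to apply Proposition \ref{prop-cell-rep-m}\ref{prop-cell-rep-m3} to the sign representation of $W$, specialized at $\hq = 1$ (so $\mathcal{H}_\bC$ becomes $\mathbb{C}[W]$). Concretely, I take $V = \bC v$ with $\chi(\hq) = 1$ and action $\nT_w \cdot v = (-1)^{\ell(w)} v$; this is a one-dimensional, hence irreducible, representation of $\mathcal{H}_\bC$ to which Proposition \ref{prop-cell-rep-m}\ref{prop-cell-rep-m3} applies, producing a two-sided cell $\cC$ satisfying conditions \ref{prop-cell-rep-i} and \ref{prop-cell-rep-ii}. The goal is to identify this $\cC$ with $\cC_{\text{low}}$.

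First, I would compute the action of $C_w$ on $V$ for an arbitrary $w \in W$. Using formula \eqref{eq-Cw} and the fact that $\nT_y$ acts by $(-1)^{\ell(y)}$ at $q = 1$, the scalar by which $C_w$ acts is
\begin{equation*}
  \sum_{y \le w} (-1)^{\ell(w)+\ell(y)} P_{y,w}(1) \cdot (-1)^{\ell(y)} = (-1)^{\ell(w)} \sum_{y \le w} P_{y,w}(1).
\end{equation*}
By Lemma \ref{lemma-pos}, every $P_{y,w}$ lies in $\mathbb{N}[q]$, and $P_{w,w} = 1$, so this scalar is nonzero. Thus $C_w \cdot V \ne 0$ for \emph{every} $w \in W$. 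Plugging this into condition \ref{prop-cell-rep-i} of Proposition \ref{prop-cell-rep-m}\ref{prop-cell-rep-m1}, we get $\cC \lrle w$ for all $w \in W$.

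Next I would upgrade this to $\cC = \cC_{\text{low}}$. From $\cC \lrle w$ together with Proposition \ref{prop-cell-a}\ref{prop-cell-a-4}, we obtain $\af(w) \le \af(\cC)$ for every $w \in W$, forcing $\af(\cC) = N$ and hence $\cC \subseteq \cC_{\text{low}}$. Conversely, if $x \in \cC_{\text{low}}$, then $\cC \lrle x$ and $\af(x) = \af(\cC) = N$, so by the boundedness assumption and Proposition \ref{prop-cell-a}\ref{prop-cell-a-5} we conclude $x \lrsim \cC$, i.e., $x \in \cC$. Therefore $\cC_{\text{low}} = \cC$ is a two-sided cell and it lies below every element of $W$ in the $\lrle$ pre-order.

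The only delicate ingredient is the nonvanishing of the action of every $C_w$ on the sign representation; everything else is bookkeeping with the quoted statements. Lemma \ref{lemma-pos} (positivity of the Kazhdan--Lusztig polynomials, due to Elias--Williamson) is what makes the one-line argument work without hidden cancellations, and Proposition \ref{prop-cell-a}\ref{prop-cell-a-5} is precisely the place where the hypothesis $N < \infty$ is consumed.
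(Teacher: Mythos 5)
Your proposal is correct and follows essentially the same route as the paper: apply Proposition \ref{prop-cell-rep-m} to the sign representation specialized at $\hq = 1$, use the Elias--Williamson positivity (Lemma \ref{lemma-pos}) to see that every $C_w$ acts by the nonzero scalar $(-1)^{\ell(w)}\sum_{y\le w}P_{y,w}(1)$, and then identify the resulting cell with $\cC_{\text{low}}$ via Proposition \ref{prop-cell-a} \ref{prop-cell-a-4} and \ref{prop-cell-a-5}. The only cosmetic difference is that you justify nonvanishing via $P_{w,w}=1$ while the paper cites $P_{y,w}(0)=1$ for all $y\le w$; both suffice.
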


\begin{proof}
  Let $\varepsilon: s \mapsto -1, \forall s \in S$ be the sign representation of $W$.
  We regard $\varepsilon$ as a representation of $\cH_\bC$ on which $\hq$ acts by 1.
  Then, by the formula \eqref{eq-Cw}, we see that for any $w \in W$ the element $C_w$ acts by the scalar $\varepsilon(C_w) = (-1)^{\ell(w)} \sum_{y} P_{y,w}(1)$.
  By Proposition \ref{prop-cell-a}\eqref{eg-C-3} and Theorem~\ref{lemma-pos}, this number is nonzero.
  Let $\mathcal{C}$ be the two-sided cell attached to $\varepsilon$ as in Proposition \ref{prop-cell-rep-m}.
  Then, Proposition \ref{prop-cell-rep-m} implies that $\mathcal{C} \lrle w$ for any $w$.
  By Proposition \ref{prop-cell-a}\eqref{prop-cell-a-4}, we have $\af(\cC) \ge \af(w)$ for any $w \in W$.
  Now we choose $w \in W$ such that $\af(w) = N$.
  Then, $\af(\cC) = N$.
  By definition, we have $\cC \subseteq \cC_\text{low}$.

  Conversely, for any $w \in \cC_\text{low}$, that is, $\af(w) = N$, we have seen that $\cC \lrle w$.
  Thus, $w \in \cC$ by Proposition \ref{prop-cell-a}\eqref{prop-cell-a-5}.
  Therefore, $\cC = \cC_\text{low}$ and $\cC_\text{low}$ is a two-sided cell.
\end{proof}

\begin{remark}
  The lowest two-sided cell was first studied for affine Weyl groups in \cite{Lusztig85-cell-i, Shi87}.
  It was later studied for general Coxeter groups in \cite[Theorem 1.5]{Xi12} with the assumption that $N(W)$ is a bound for the function $\af$.
  See also \cite[Theorem~2.1]{Xie17}.
\end{remark}


\begin{remark} \label{rmk-sign-1}
  From the proof of Corollary \ref{cor-cell-low}, we see that the statement that ``the function $\af$ is bounded'' is equivalent to saying that there exists a two-sided cell attached to the sign representation $\varepsilon$ in the sense of Proposition \ref{prop-cell-rep-m}\eqref{prop-cell-rep-m1}.
\end{remark}

\section{Coxeter group representations of \texorpdfstring{$\af$}{a}-function value 1} \label{sec-main-1}
\subsection{Overview}
Let $(W,S)$, $\cH_\bC$, $C_w$, etc be as in the last section.
The representations we consider are all representations over the field $\bC$.

Following \cite[Proof of Lemma 1.9]{Lusztig87-cell-iii}, we define a function $\af$ on the set of representations of $\mathcal{H}_\mathbb{C}$ as follows.

\begin{definition} \label{def-a}
Suppose $V$ is a nonzero representation of $\cH_\bC$, not necessary to be irreducible.
If there exists a natural number $N$ such that
\begin{enumerate}
  \item for each $x \in W$, if $\af(x) > N$, then we have $C_x \cdot V = 0$, and
  \item there exists an element $w \in W$ such that $\af(w) = N$ and $C_w \cdot V \neq 0$,
\end{enumerate}
then we say $N$ is the \emph{$\af$-function value} of $V$.
\end{definition}

In particular, if $V$ is irreducible and there exists a two-sided cell $\cC$ attached to $V$ in the sense of Proposition \ref{prop-cell-rep-m}\eqref{prop-cell-rep-m1}, then $\af(\cC)$ is the $\af$-function value of $V$.

From now on, we shall adopt the following convention.

\begin{convention} \label{conv-chiq=1} \leavevmode
  \begin{enumerate}
    \item In the rest of this paper, we choose the specialization $\chi: \AC \to \bC$ so that $\chi(\hq) = 1$.
        Then, a representation of $\cH_\bC$ on which $\hq$ acts by $\chi(\hq) = 1$ is the same thing as a representation of $W$.
        In this way, the $\af$-function value of a representation of $W$ is defined (c.f., Definition \ref{def-a}).
    \item For an arbitrary element $h = \sum_{w \in W} a_w \nT_w \in \cH_\bC$ ($a_w \in \AC$), we denote again by $h$ the element $\sum_{w \in W} \chi(a_w) w$ in the group algebra $\bC[W]$.
        In particular, in $\bC[W]$ we denote by $C_w$ the element $\sum_{y \in W} (-1)^{\ell(w)+\ell(y)} P_{y,w}(1) y$.
  \end{enumerate}
\end{convention}


\begin{example} \label{eg-a=0}
  Recall that $C_e = \nT_e$, $C_s = \nT_s - \hq \in \cH_\bC$ (see Example \ref{eg-C}\eqref{eg-C-1}).
  They are specialized to $C_e = e$, $C_s = s - e \in \bC[W]$.
  Therefore, a nonzero representation $V$ of $W$ is of $\af$-function value $0$ if and only if the action of $W$ on $V$ is trivial (note that $\af(w) = 0$ if and only if $w = e$, see Proposition \ref{prop-cell-a}\eqref{prop-cell-a-2}).
\end{example}

The main result of this section is the following characterization of representations of $\af$-function value $1$.

\begin{theorem} \label{thm-A1}
Let $V$ be a nonzero representation of $W$.
Then, the following two conditions are equivalent:
\begin{enumerate}
  \item \label{thm-A1-1} for any $w \in W$ with $\af(w) > 1$, we have $C_w \cdot V = 0$;
  \item \label{thm-A1-2} there is no $v \in V \setminus \{0\}$, such that $r \cdot v = t \cdot v = -v$ for some $r, t \in S$ with $r \ne t$ and $m_{rt} < \infty$.
\end{enumerate}
\end{theorem}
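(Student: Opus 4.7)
The plan is to reduce both directions of the equivalence to a single intermediate condition, namely that $C_{w_{rt}} \cdot V = 0$ for every pair $r, t \in S$ with $r \ne t$ and $m_{rt} < \infty$; call this condition $(\ast)$.

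First I would show that conditions \ref{thm-A1-2} and $(\ast)$ are equivalent. Fix such a pair and set $m = m_{rt}$. The elements $y \le w_{rt}$ are exactly the elements of the dihedral subgroup $D_m = \langle r, t \rangle$, and $P_{y, w_{rt}} = 1$ for each such $y$ by Example \ref{eg-C} \ref{eg-C-2}. Specializing the formula \eqref{eq-Cw} at $q = 1$ therefore yields
\begin{equation*}
C_{w_{rt}} \;=\; \sum_{y \in D_m} (-1)^{m + \ell(y)} \, y \;=\; (-1)^m \sum_{y \in D_m} \varepsilon(y) \, y \;=\; 2m \, (-1)^m \, e_\varepsilon \;\in\; \bC[D_m],
\end{equation*}
where $e_\varepsilon$ is the idempotent of $\bC[D_m]$ projecting onto the sign representation. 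Since $2m(-1)^m \ne 0$, the operator $C_{w_{rt}}$ acts on $V$ as a nonzero scalar multiple of the projection onto the $\varepsilon$-isotypic component of $V$ viewed as a $D_m$-module. Because this isotypic component coincides with $\{v \in V : rv = tv = -v\}$, $(\ast)$ fails for the pair $(r,t)$ if and only if a nonzero such $v$ exists. Ranging over all admissible pairs yields \ref{thm-A1-2} $\Leftrightarrow (\ast)$.

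Next, the implication \ref{thm-A1-1} $\Rightarrow (\ast)$ is immediate from $\af(w_{rt}) = m_{rt} \ge 2$ (Proposition \ref{prop-cell-a} \ref{prop-cell-a-3}). For the converse $(\ast) \Rightarrow$ \ref{thm-A1-1}, I would induct on $\ell(w)$ over elements $w \in W$ with $\af(w) \ge 2$. Such a $w$ has a non-unique reduced expression by Lemma \ref{lem-C1} \ref{lem-C1-1}, so the Matsumoto--Tits theorem (Lemma \ref{lemma-two-expr}) provides a reduced expression $u \cdot r_{m_{rt}} \cdot v$ of $w$ for some $r, t \in S$; write reduced factorizations $u = s_1 \cdots s_p$ and $v = s'_1 \cdots s'_q$. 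Computing $C_{s_1} \cdots C_{s_p} \cdot C_{w_{rt}} \cdot C_{s'_1} \cdots C_{s'_q}$ iteratively via Proposition \ref{prop-cell-a} \ref{prop-cell-a-1} yields an identity
\begin{equation*}
C_{s_1} \cdots C_{s_p} \cdot C_{w_{rt}} \cdot C_{s'_1} \cdots C_{s'_q} \;=\; C_w + \sum_{z} a_z \, C_z,
\end{equation*}
in which every $z$ satisfies $\ell(z) < \ell(w)$ and $z \lrle w_{rt}$; in particular $\af(z) \ge m_{rt} \ge 2$ by Proposition \ref{prop-cell-a} \ref{prop-cell-a-4}. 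Applying the identity to $V$: the left-hand side annihilates $V$ by $(\ast)$, and $C_z \cdot V = 0$ for each $z$ by the induction hypothesis, hence $C_w \cdot V = 0$.

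The main technical point is justifying this expansion. The relation $z \lrle w_{rt}$ is automatic from the definition of $\lrle$, since the product visibly lies in the two-sided ideal generated by $C_{w_{rt}}$. The strict length bound $\ell(z) < \ell(w)$ follows by expanding the product in the standard basis $\{\nT_y\}$: since $u \cdot w_{rt} \cdot v$ is reduced, the unique top-length summand is $\nT_w$, and conversion back to the Kazhdan--Lusztig basis preserves this leading behavior so that only $C_w$ contributes a length-$\ell(w)$ summand.
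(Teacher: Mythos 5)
Your proposal is correct, and its overall skeleton coincides with the paper's: both reduce the theorem to the intermediate condition that $C_{w_{rt}}$ annihilates $V$ for every finite $m_{rt}$, and both prove $(\ast) \Rightarrow$ \ref{thm-A1-1} by induction on $\ell(w)$ using the Matsumoto--Tits theorem to exhibit a braid word $rtr\cdots$ inside a reduced expression. Two of your ingredients are genuinely different, though. First, for the dihedral step the paper decomposes $V|_{\langle r,t\rangle}$ into irreducibles and checks case by case (Lemmas \ref{lem-Cwrt} and \ref{lem-Cwrt=0}) that $C_{w_{rt}}$ kills $\II$, $\varepsilon_r$, $\varepsilon_t$ and each $\rho_k$ but not $\varepsilon$; your observation that the specialized $C_{w_{rt}}$ equals $2m(-1)^m e_\varepsilon$ packages all of this into one line and delivers both implications \ref{thm-A1-2} $\Leftrightarrow (\ast)$ simultaneously --- a cleaner argument that makes the role of the sign-isotypic component transparent. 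Second, in the induction the paper peels off a single generator, using only the one identity $C_sC_{sw}=C_w+\sum\mu_{z,sw}C_z$ of Proposition \ref{prop-cell-a} \ref{prop-cell-a-1} and the bound $\af(z)\ge\af(sw)>1$, whereas you expand the full product $C_{s_1}\cdots C_{s_p}C_{w_{rt}}C_{s'_1}\cdots C_{s'_q}$ at once and get the stronger bound $\af(z)\ge m_{rt}$ from $z\lrle w_{rt}$. Your version needs the triangularity claim that the product equals $C_w$ plus strictly shorter terms with the coefficient of $C_w$ equal to $1$; your justification via the $\nT$-basis is sound (reduced expression $\Rightarrow$ unique top term $\nT_w$ with coefficient $1$, and the change of basis is unitriangular), but note that the purely iterative route through Proposition \ref{prop-cell-a} \ref{prop-cell-a-1} alone is not quite enough, since at intermediate stages you multiply $C_s$ against $C_y$ with $sy<y$, a case covered by the standard identity $C_sC_y=-(\hq+\mhq)C_y$ rather than by the cited proposition. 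This is a minor presentational gap, not a mathematical one.
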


\begin{remark} \label{rmk-a=1} 
 \begin{enumerate}
   \item This theorem is saying that a nonzero nontrivial representation $V$ of $W$ is of $\af$-function value $1$ if and only if for any $r,t \in S$ with $r \ne t$ and $m_{rt} < \infty$ there is no common eigenvector of $r,t$ with eigenvalue $-1$.
       In particular, the geometric representation of $W$ (see \cite[Ch.\ V, \S 4]{Bourbaki-Lie456}) is of $\af$-function value $1$.
       Some generalizations of this representation (see for example \cite{BB05, BCNS15, Donnelly11, Hee91, Krammer09, Vinberg71}) are also of $\af$-function value $1$.
   \item \label{rmk-a=1-2} In another paper \cite{Hu23}, the author defined and classified two classes of representations, which are called generalized geometric representations and R-representations.
       All of these representations are of $\af$-function value $1$.
       In Section \ref{sec-main-2}, we will see that for certain simply laced Coxeter groups all the irreducible representations of $\af$-function value $1$ are those so-called R-representations.
   \item \label{rmk-a=1-3} In paper \cite{Hu22}, the author constructs some irreducible representations of infinite dimension for certain Coxeter groups.
       These representations are of $\af$-function value $1$ as well. 
       This indicates that the representations of $\af$-function value $1$ might be complicated in general.
       See also  Remark \ref{rmk-simply-laced}\eqref{rmk-simply-laced-2}.
   \item Recall Lemma \ref{lem-C1} that we have the two-sided cell $\cC_1$ when $(W,S)$ is irreducible.
       By Proposition \ref{prop-cell-rep-m}\eqref{prop-cell-rep-m1}, an irreducible representation of $\af$-function value $1$ is a quotient of the cell representation $\cJ_{\cC_1}^\chi$.
   \item \label{rmk-a=1-5} Suppose $I \subseteq S$ is a subset and the parabolic subgroup $W_I := \langle I \rangle$ generated by $I$ is finite.
       Suppose moreover there exists a nonzero vector $v$ in $V$ such that $s \cdot v = - v$ for any $s \in I$.
       Then, it can be shown that the $\af$-function value of $V$ is not less than $\ell(w_I)$, where $w_I$ is the longest element in $W_I$.
       In view of this fact and Theorem \ref{thm-A1}, as well as the proof of Corollary \ref{cor-cell-low}, we feel that the $\af$-function value of a representation of $W$ might be related to the pattern in which elements in $S$ share common eigenvectors with eigenvalue $-1$.
 \end{enumerate}
\end{remark}

\subsection{Proof of Theorem \ref{thm-A1}}

To prove Theorem \ref{thm-A1}, we need the following lemma.

\begin{lemma} \label{lem-Cwrt}
  Let $D_m := \langle r,t \mid r^2 = t^2 = (rt)^m = e \rangle$ be a finite dihedral group ($m \in \bN_{\ge 2}$).
  Let $\rho_k, \II, \varepsilon$ and $\varepsilon_r, \varepsilon_t$ (if $m$ is even) denote the representations defined in Subsection \ref{subsec-dih}.
  We also regarded these representations as homomorphisms from the group algebra $\bC[D_m]$ to $GL_1(\bC)$ or $GL_2(\bC)$.
  \begin{enumerate}
    \item \label{lem-Cwrt-1} If $1 \le k < \frac{m}{2}$, then $\rho_k(C_{w_{rt}}) = 0$.
    \item \label{lem-Cwrt-2} $\II(C_{w_{rt}}) = 0$, $\varepsilon (C_{w_{rt}}) \ne 0$.
    \item \label{lem-Cwrt-3} If $m$ is even, then $\varepsilon_r (C_{w_{rt}}) = \varepsilon_t (C_{w_{rt}}) = 0$.
  \end{enumerate}
\end{lemma}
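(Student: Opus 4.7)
The strategy is to establish the identity
\[
  r \cdot C_{w_{rt}} = t \cdot C_{w_{rt}} = - C_{w_{rt}}
\]
in $\bC[D_m]$ (with $C_{w_{rt}}$ interpreted via the specialization $\chi(\hq) = 1$), and then to exploit it via an eigenspace criterion. One route to the identity is the standard Hecke-algebraic fact $\nT_s C_w = - \mhq C_w$ whenever $sw < w$; alternatively it can be verified directly from \eqref{Cwrt}, since left-multiplication by $r$ sends $r_j \mapsto t_{j-1}$, $t_j \mapsto r_{j+1}$, and $w_{rt} = r_m \mapsto t_{m-1}$, after which a routine reindexing shows that $r \cdot C_{w_{rt}}$ equals $-C_{w_{rt}}$ term-by-term.

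The upshot is the following criterion. For any representation $\pi$ of $D_m$ on a space $V$ and any $v \in V$, the vector $w := \pi(C_{w_{rt}}) v$ satisfies $r \cdot w = t \cdot w = -w$. In particular, the image of $\pi(C_{w_{rt}})$ lies in the common $-1$-eigenspace of $r$ and $t$; so $\pi(C_{w_{rt}}) = 0$ as soon as this eigenspace is $\{0\}$.

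For the vanishing assertions in the lemma I just check the criterion. In $\rho_k$ with $1 \le k < \frac{m}{2}$, Remark \ref{rmk-Dm-rep} identifies the $-1$-eigenspaces of $r$ and $t$ as the distinct lines $\bC\beta_r$ and $\bC\beta_t$, whose intersection is $\{0\}$, giving \ref{lem-Cwrt-1}. In $\II$ the $-1$-eigenspace of $r$ is $\{0\}$; in $\varepsilon_r$ (resp.\ $\varepsilon_t$) the $-1$-eigenspace of $t$ (resp.\ $r$) is $\{0\}$. This yields the vanishing parts of \ref{lem-Cwrt-2} and \ref{lem-Cwrt-3}.

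The remaining claim $\varepsilon(C_{w_{rt}}) \ne 0$ is the only one to which the criterion does not apply, since both $r$ and $t$ act by $-1$ on $\varepsilon$. Here one computes directly from \eqref{eq-Cw} that
\[
  \varepsilon(C_{w_{rt}}) = (-1)^{\ell(w_{rt})} \sum_{y \le w_{rt}} P_{y, w_{rt}}(1),
\]
and Lemma \ref{lemma-pos} together with Proposition \ref{prop-cell-a} \ref{eg-C-3} makes each $P_{y, w_{rt}}(1)$ a positive integer, so the sum is strictly positive. The only real obstacle is the initial identity $s \cdot C_{w_{rt}} = -C_{w_{rt}}$; once it is established, the rest of the lemma dissolves into a uniform eigenspace check plus a one-line positivity argument.
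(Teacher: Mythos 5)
Your proof is correct, but it takes a genuinely different route from the paper's. The paper specializes the explicit formula \eqref{Cwrt} to $\bC[D_m]$ and then argues geometrically inside each $\rho_k$: the $m$ group elements $r_i, t_i$ with $i$ even act as the $m$ rotations through multiples of $\frac{2k\uppi}{m}$ and those with $i$ odd as the $m$ reflections, and each of these two sums of matrices vanishes, so $\rho_k(C_{w_{rt}})=0$; the cases $\varepsilon_r,\varepsilon_t$ then follow from $\rho_{\frac{m}{2}}\simeq\varepsilon_r\oplus\varepsilon_t$, and $\varepsilon(C_{w_{rt}})=(-1)^m\cdot 2m$ is computed directly. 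You instead establish $s\cdot C_{w_{rt}}=-C_{w_{rt}}$ for $s=r,t$ (which is indeed the standard identity $\nT_s C_w=-\mhq C_w$ for $sw<w$, specialized at $\hq\mapsto 1$; your reindexing check also goes through), and deduce that the image of $\pi(C_{w_{rt}})$ sits in $V_r^-\cap V_t^-$, reducing every vanishing claim to the observation that this intersection is zero. This is arguably cleaner and more conceptual --- it treats all the vanishing cases uniformly and makes explicit the link to the common $(-1)$-eigenvector condition that drives Theorem \ref{thm-A1} --- whereas the paper's argument is a self-contained computation needing no Hecke-algebra input beyond \eqref{Cwrt}. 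Two small remarks: your appeal to Lemma \ref{lemma-pos} and Proposition \ref{prop-cell-a} \ref{eg-C-3} for $\varepsilon(C_{w_{rt}})\ne 0$ is heavier than necessary, since Example \ref{eg-C} \ref{eg-C-2} already gives $P_{y,w_{rt}}=1$ for all $y\le w_{rt}$ and hence $\varepsilon(C_{w_{rt}})=(-1)^m\cdot 2m$ on the nose; and if you rely on the identity $\nT_s C_w=-\mhq C_w$ you should cite it or prove it, since the paper never states it --- your fallback direct verification from \eqref{Cwrt} is the safer self-contained option.
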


\begin{proof}
  Recall \eqref{Cwrt} that $C_{w_{rt}}  = \nT_{w_{rt}} + (-1)^m \hq[m] + \sum_{i = 1}^{m-1} (-1)^i \hq[i] (\nT_{r_{m-i}} + \nT_{t_{m-i}})$.
  After our specialization, we have
  \begin{align*}
    \bC[D_m] \ni C_{w_{rt}} & = w_{rt} + (-1)^m + \sum_{i = 1}^{m-1} (-1)^i (r_{m-i} + t_{m-i}) \\
    & = (-1)^m \bigl[ (e + r_2 + t_2 + r_4 + t_4 + \cdots + w_{rt}) \\
     & \mathrel{\phantom{= (-1)^m \bigl[}} - (r + t + r_3 + t_3 + \cdots + r_{m-1} + t_{m-1}) \bigr] \\
    & \mathrel {\phantom{=}} \text{(if $m$ is even)} \\
     \mathrel{\text{or } } & =  (-1)^m \bigl[ (e + r_2 + t_2 + r_4 + t_4 + \cdots + r_{m-1} + t_{m-1}) \\
     & \mathrel{\phantom{= (-1)^m \bigl[}} - (r + t + r_3 + t_3 + \cdots + w_{rt} ) \bigr] \\
     & \mathrel {\phantom{=}} \text{(if $m$ is odd).}
  \end{align*}

  Recall Subsection \ref{subsec-dih} that for any integer $k$ satisfying $1 \le k \le \frac{m}{2}$, $\rho_k$ is a representation of dimension 2 with a real form.
  On this real plane, $r_i$ is a rotation (of angle $\frac{ik\uppi}{m}$) if $i$ is even (we regard $e$ as the rotation of angle 0), and is a reflection if $i$ is odd.
  Similar for $t_i$.
  Therefore, $\rho_k(C_{w_{rt}})$ is the difference between the $m$ rotations and the $m$ reflections up to a sign which depends on the parity of $m$.
  But notice that the sum of the $m$ rotations is zero, and so is the sum of the $m$ reflections.
  Thus, $\rho_k(C_{w_{rt}}) = 0$.

  In particular, if $m$ is even and $k = \frac{m}{2}$, then $\rho_\frac{m}{2} \simeq \varepsilon_r \oplus \varepsilon_t$.
  Hence, $\varepsilon_r (C_{w_{rt}}) = \varepsilon_t (C_{w_{rt}}) = 0$.

  The equality $\II(C_{w_{rt}}) = 0$ is straightforward.
  At last, note that $\varepsilon(r_i) = \varepsilon(t_i) = (-1)^i$.
  Thus, $\varepsilon(C_{w_{rt}}) = (-1)^m (m - (-m)) \ne 0$.
\end{proof}

\begin{lemma} \label{lem-Cwrt=0}
  Suppose $V$ is a nonzero representation of $W$ satisfying the condition \eqref{thm-A1-2} of Theorem \ref{thm-A1}.
  Suppose $r,t \in S$ such that $r \ne t$ and $m_{rt} < \infty$.
  Then, $C_{w_{rt}} \cdot V = 0$.
\end{lemma}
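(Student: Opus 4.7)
My strategy is to restrict $V$ to the finite dihedral subgroup $D_m := \langle r, t \rangle \subseteq W$ (where $m = m_{rt}$), invoke complete reducibility for finite groups over $\bC$, and then apply the character-level computation already carried out in Lemma \ref{lem-Cwrt}.

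To sidestep the possible infinite-dimensionality of $V$, I would fix an arbitrary vector $v \in V$ and consider the finite-dimensional cyclic $D_m$-subrepresentation $V_0 := \bC[D_m] \cdot v$. By Maschke's theorem $V_0$ decomposes as a direct sum of irreducible $D_m$-representations, and by Lemma \ref{lem-Dm-rep} each simple summand is one of $\II$, $\varepsilon$, $\varepsilon_r$ or $\varepsilon_t$ (if $m$ is even), or $\rho_k$ for some $1 \le k < \frac{m}{2}$. The hypothesis \ref{thm-A1-2} of Theorem \ref{thm-A1}, applied to the ambient $V$, rules out the appearance of $\varepsilon$ as a summand of $V_0$, since any nonzero vector in a copy of $\varepsilon$ would be a common $-1$-eigenvector of $r$ and $t$ in $V$.

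By Lemma \ref{lem-Cwrt}, the element $C_{w_{rt}}$ acts as zero on each of the remaining irreducibles $\II$, $\varepsilon_r$, $\varepsilon_t$, and $\rho_k$ for $1 \le k < \frac{m}{2}$. Consequently $C_{w_{rt}}$ annihilates $V_0$, whence $C_{w_{rt}} \cdot v = 0$; since $v$ was arbitrary, $C_{w_{rt}} \cdot V = 0$, as desired. I do not foresee a serious obstacle: the only point requiring attention is that passing from $V$ to the cyclic subrepresentation $V_0$ does not lose the hypothesis, but this is immediate because condition \ref{thm-A1-2} is formulated vectorwise in $V$ and transfers verbatim to any $D_m$-stable subspace. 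All the real work — namely that $C_{w_{rt}}$ kills every non-sign irreducible of $D_m$ — has already been done in Lemma \ref{lem-Cwrt}.
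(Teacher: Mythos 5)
Your proposal is correct and follows essentially the same route as the paper: restrict to the dihedral subgroup $\langle r,t\rangle$, use semisimplicity of $\bC[D_{m_{rt}}]$ to decompose, exclude $\varepsilon$ via condition \ref{thm-A1-2}, and apply Lemma \ref{lem-Cwrt} to kill the remaining irreducibles. Your detour through the cyclic subrepresentation $\bC[D_m]\cdot v$ is a harmless (slightly more careful) way of handling possible infinite-dimensionality, which the paper dispatches by noting directly that any module over a semisimple group algebra is a direct sum of irreducibles.
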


\begin{proof}
  The subgroup $\langle r,t \rangle$ of $W$ generated by $r$ and $t$ is a finite dihedral group $D_{m_{rt}}$.
  We consider the restriction of the representation $V$ to the subgroup $\langle r,t \rangle$.
  Then, $V$ (either finite or infinite dimensional) is decomposed into a direct sum of irreducible representations of $D_{m_{rt}}$ since the group algebra $\bC[D_{m_{rt}}]$ is semisimple.
  By the assumption, the sign representation $\varepsilon$ does not appear in this direct sum.
  By Lemma \ref{lem-Dm-rep} and Lemma \ref{lem-Cwrt}, we see that $C_{w_{rt}}$ acts by zero on every irreducible representation of $D_{m_{rt}}$ except $\varepsilon$.
  Thus, $C_{w_{rt}} \cdot V = 0$.
\end{proof}

\begin{proof}[Proof of Theorem \ref{thm-A1}]
  Suppose there exist $r,t \in S$ with $r \ne t$ and $m_{rt} < \infty$, and a nonzero vector $v \in V$ such that $r \cdot v = t\cdot v = -v$.
  Then, $v$ spans a subrepresentation in $V$ for the dihedral subgroup $\langle r,t \rangle \simeq D_{m_{rt}}$.
  This one-dimensional subrepresentation is isomorphic to the sign representation $\varepsilon$ of $\langle r,t \rangle$.
  By Lemma \ref{lem-Cwrt}\eqref{lem-Cwrt-2}, we have $C_{w_{rt}} \cdot v \ne 0$.
  On the other hand, by Proposition \ref{prop-cell-a}\eqref{prop-cell-a-3} we have $\af(w_{rt}) = m_{rt} \ge 2$.
  Thus, the statement \eqref{thm-A1-1} implies the statement \eqref{thm-A1-2}.

  Conversely, suppose $V$ satisfies the condition \eqref{thm-A1-2}.
  Suppose $w \in W$ and $\af(w) > 1$.
  We do induction on $\ell(w)$ to show that $C_w \cdot V = 0$.
  First of all, we must have $\ell(w)  \ge 2$.
  Otherwise if $\ell(w) = 0$ or $1$, then $\af(w) = \ell(w)$ by  Proposition \ref{prop-cell-a}\eqref{prop-cell-a-2}.
  If $\ell(w) = 2$, then by Lemma \ref{lem-C1}\eqref{lem-C1-1} we have $w = w_{rt}$ for some $r,t \in S$ with $m_{rt} = 2$.
  In this case $C_w \cdot V = 0$ by Lemma \ref{lem-Cwrt=0}.

  Suppose now $\ell(w) > 2$.
  By Lemma \ref{lem-C1}\eqref{lem-C1-1}, Theorem~\ref{lemma-two-expr} and the assumption that $\af(w) > 1$, $w$ has a reduced expression of the form
  \begin{equation*}
    w = \cdots \underbrace{(rtr\cdots)} _{m_{rt} \text{ factors}} \cdots \quad (r,t \in S, r \ne t, m_{rt} < \infty).
  \end{equation*}
  That is, we can write $w = x w_{rt} y$ where $x, y \in W$ and $\ell(w) = \ell(x) + m_{rt} + \ell(y)$.
  If $x = y = e$, then $w = w_{rt}$ and $C_w \cdot V = 0$ by Lemma \ref{lem-Cwrt=0}.
  If $x \ne e$, then there exists $s \in S$ such that $\ell(s x) < \ell(x)$.
  It follows that $\ell(sw) < \ell(w)$.
  Then, by Proposition \ref{prop-cell-a}\eqref{prop-cell-a-1} we have
  \begin{equation*}
    C_s C_{sw} = C_w + \sum_{\substack{ z < sw, sz < z \\ \deg_q P_{z,sw} = \frac{1}{2} (\ell(sw) - \ell(z) - 1)}} \mu_{z,sw} C_z.
  \end{equation*}
  In particular, those $z$ which occur in the summation satisfy $\ell(z) < \ell(sw)$, $z\lrle sw$, and then $\af(z) \ge \af(sw)$ by Proposition \ref{prop-cell-a}\eqref{prop-cell-a-4}.
  Notice that $sw$ also has a reduced expression containing a consecutive product $rtr \cdots$ ($m_{rt}$ factors) since $sw = (sx)w_{rt}y$ and $\ell(sw) = \ell(sx) + m_{rt} + \ell(y)$.
  Then, we have $\af(sw) > 1$ by Lemma \ref{lem-C1}\eqref{lem-C1-1}.
  By induction hypothesis, we have  $C_{sw} \cdot V = 0$ and $C_z \cdot V = 0$ for those $z$ which occur in the summation.
  Therefore, $C_w \cdot V = (C_s C_{sw} - \sum_{z} \mu_{z,sw} C_z) \cdot V = 0$.
  The case where $y \ne e$ is similar: if $\ell(ys) < \ell(y)$ for some $s \in S$, then $C_w = C_{ws} C_s - \sum_z \mu_{z, ws} C_z$ and $C_{ws} \cdot V = C_z \cdot V = 0$ for those $z$ which occur.
\end{proof}

\section{Simply laced Coxeter groups} \label{sec-main-2}
\subsection{Overview}
A Coxeter group $(W,S)$ is called \emph{simply laced} if $m_{st} = 2$ or $3$ for any $s,t \in S$.
The main result (Theorem \ref{thm-A1=R}) of this section states that for certain simply laced Coxeter groups we can determine all the irreducible representations of $W$ of $\af$-function value $1$.
In other words, for such Coxeter groups we are able to determine all the simple quotients of the cell representation $\cJ_{\cC_1}^\chi$, where $\chi(\hq) = 1$ as we set in Convention \ref{conv-chiq=1}.
It turns out that these irreducible representations of $\af$-function value $1$ are the so-called R-representations defined and classified in \cite{Hu23}.
Before presenting the precise statement, we shall introduce some related notations and terminologies.

\begin{notation}
  For a representation $(V, \rho)$ of $W$ and for $s \in S$, we denote by $V_s^+$ and $V_s^-$ the eigen-subspaces of $\rho(s)$ in $V$ with eigenvalues $+1$ and $-1$ respectively.
\end{notation}

Clearly, we have $V = V_s^+ \bigoplus V_s^-$ as a vector space for any $s \in S$.

The R-representations defined in \cite{Hu23} are a class of representations on which each $s \in S$ acts by an abstract reflection.
For a simply laced Coxeter group $(W,S)$, the definition of R-representations  reduces to the following.

\begin{definition} \label{def-R}
  A representation $(V,\rho)$ of a simply laced Coxeter group $(W,S)$ is called an \emph{R-representation} if
  \begin{enumerate}
    \item \label{def-R-1} $\sum_{s \in S} V_s^- = V$,
    \item \label{def-R-2} for any $r,t \in S$ with $r \ne t$, we have $V_r^- \cap V_t^- = 0$,
    \item \label{def-R-3} for any $s \in S$, we have $\dim V_s^- = 1$.
  \end{enumerate}
\end{definition}

Clearly, an R-representation is of $\af$-function value $1$ by Theorem \ref{thm-A1}.
Now we can state the main theorem of this section.
Recall that a Coxeter group $(W,S)$ is said to be irreducible if its Coxeter graph $G$ is connected.

\begin{theorem} \label{thm-A1=R}
  Let $(W,S)$ be an irreducible simply laced Coxeter group.
  Suppose there is at most one cycle in the Coxeter graph $G$.
  Then, any irreducible representation of  $W$ of $\af$-function value $1$ is an R-representation.
  In particular, such representations are finite dimensional (with dimension not greater than $\abs{S}$).
\end{theorem}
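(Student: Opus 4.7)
The plan is to verify, for an irreducible representation $V$ of $W$ of $\af$-function value $1$, each of the three defining conditions of an R-representation in Definition \ref{def-R}, and then read off the dimension bound. Note that $V$ is automatically nontrivial, since the trivial representation has $\af$-value $0$ by Example \ref{eg-a=0}.

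The first two conditions are short. Condition \ref{def-R-2}, namely $V_r^- \cap V_t^- = 0$ for $r \ne t$, is immediate from Theorem \ref{thm-A1} because $m_{rt} \in \{2,3\} < \infty$ by the simply laced hypothesis. For condition \ref{def-R-1}, I would check that $U := \sum_{s \in S} V_s^-$ is a $W$-submodule: any $v \in V_s^-$ satisfies $s' v = v + (s'-1)v$ with $(s'-1)v \in V_{s'}^-$, so $s' v \in U$; hence $W$ acts trivially on $V/U$, and irreducibility together with non-triviality of $V$ forces $U = V$. Next I would establish that $c := \dim V_s^-$ is a common value on all of $S$, and that the linear maps $\phi_{ts} : V_s^- \to V_t^-,\ v \mapsto (t-1)v$ are mutually inverse isomorphisms for adjacent $s, t$: restricting $V$ to $\langle s, t \rangle \cong D_3$, condition \ref{def-R-2} excludes the sign isotypic component, so by Lemma \ref{lem-Dm-rep} and Remark \ref{rmk-Dm-rep} the restriction is a direct sum of trivial and $\rho_1$ copies, whence both claims follow by inspection; connectedness of the Coxeter graph propagates the constancy of $c$.

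The heart of the argument is showing $c = 1$, which I would do by fabricating a proper nonzero submodule of $V$ out of any ``bad'' subspace of $V_{s_0}^-$. Fix $s_0 \in S$ and a spanning tree $G' \subseteq G$: take $G' = G$ when $G$ is already a tree, and $G' = G \setminus \{e_0\}$ for some edge $e_0 = s_0 - s^{**}$ of the unique cycle otherwise (one may arrange $s_0$ to lie on the cycle). For each $s \in S$ let $\pi_s : V_{s_0}^- \to V_s^-$ be the composition of the $\phi$'s along the unique $G'$-path from $s_0$ to $s$, and let $\Psi := \phi_{s_0 s^{**}} \circ \pi_{s^{**}} : V_{s_0}^- \to V_{s_0}^-$ be the holonomy around the cycle (take $\Psi = \Id$ when $G$ is a tree). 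For any $\Psi$-invariant subspace $A \subseteq V_{s_0}^-$, set $V(A) := \sum_{s \in S} \pi_s(A)$. A case analysis of $s' v$ for $v \in \pi_s(A)$ -- the cases $s' = s$, $s'$ non-adjacent to $s$, $s'$ adjacent to $s$ via a $G'$-edge, and $s'$ adjacent to $s$ via $e_0$ -- shows $V(A)$ is a $W$-submodule, the removed-edge case being exactly where $\Psi$-invariance is needed. Then for any $u = \sum_s v_s \in V(A) \cap V_{s_0}^-$, the identity $s_0 u = -u$ unwinds to $u = v_{s_0} - \tfrac{1}{2} \sum_{s \text{ adj.\ } s_0} \phi_{s_0 s}(v_s) \in A$, so $V(A) \cap V_{s_0}^- = A$.

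Irreducibility of $V$ thus forces every $\Psi$-invariant subspace of $V_{s_0}^-$ to be $0$ or all of $V_{s_0}^-$, making $V_{s_0}^-$ a simple module over $\bC[\Psi]$. Since every maximal ideal of $\bC[x]$ has the form $(x - \lambda)$, every simple $\bC[x]$-module is one-dimensional over $\bC$, so $\dim V_{s_0}^- = c = 1$. All three R-representation axioms now hold, and condition \ref{def-R-1} together with $\dim V_s^- = 1$ yields $\dim V \leq |S|$. The most delicate step I expect is the submodule check for $V(A)$ in the one-cycle case: the ``at most one cycle'' hypothesis is precisely what guarantees a \emph{single} holonomy constraint, whereas two or more independent cycles would impose several constraints on $A$ that could obstruct the simplicity conclusion -- consistent with Remark \ref{rmk-simply-laced} \ref{rmk-simply-laced-2}.
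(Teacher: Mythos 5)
Your proposal is correct and follows essentially the same route as the paper: the transport maps $\phi_{ts}$ are the paper's $f_{ts}$, the holonomy $\Psi$ around the unique cycle is the paper's operator $X$, and the final step (simple modules over a one-variable polynomial ring are one-dimensional by the Nullstellensatz) is identical; your only cosmetic deviation is treating the tree case as the degenerate instance $\Psi = \Id$ via a spanning tree, where the paper runs the two cases separately. One small precision to add: for the submodule check at the removed edge you need $\Psi(A) = A$, not merely $\Psi(A) \subseteq A$ (the paper enforces this by working with $\bC[X^{\pm 1}]$-submodules rather than $\bC[X]$-submodules), but since $\Psi$ is invertible this changes nothing in the conclusion.
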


\begin{remark} \label{rmk-simply-laced} \leavevmode
  \begin{enumerate}
    \item \label{rmk-simply-laced-1} The irreducible R-representations are classified in \cite{Hu23}.
        In particular, if $G$ is a tree, then such representation is unique (in fact, it is the simple quotient of the geometric representation defined in \cite[Ch.\ V, \S 4]{Bourbaki-Lie456}).
        On the other hand,
        if $G$ has a cycle, then such representations are parameterized by $\bC^\times$.
        For the reader's convenience, we present the construction of such representations in these settings in Subsection \ref{subsec-R}.
    \item \label{rmk-simply-laced-2} In contrast, if there are at least two cycles in a Coxeter graph, or if there is at least one cycle in a non-simply-laced Coxeter graph, then we can 
        construct an infinite dimensional irreducible representation, which is of $\af$-function value $1$, for the corresponding Coxeter group.
        The construction is motivated by the proof of Theorem \ref{thm-A1=R}.
        See \cite{Hu22} for details.
  \end{enumerate}
\end{remark}

\subsection{Proof of Theorem \ref{thm-A1=R}}

This subsection is devoted to proving Theorem \ref{thm-A1=R}.

Let $(W,S)$ be an irreducible simply laced Coxeter group, and $(V,\rho)$ be an irreducible representation of $W$ of $\af$-function value $1$.
We first collect some elementary results which we will use later.

\subsubsection{Basic results}
\begin{lemma} \label{lem-R} \leavevmode
  \begin{enumerate}
    \item \label{lem-R-1} For any $v \in V$ and $s \in S$, we have $s \cdot v - v \in V_s^-$.
    \item \label{lem-R-2} We have $\sum_{s \in S} V_s^- = V$.
    \item \label{lem-R-3} For any distinct elements $r, t \in S$, we have $V_r^- \cap V_t^- = 0$.
    \item \label{lem-R-4} For any $r, t \in S$ with $m_{rt} = 2$, and for any $v \in V_t^-$, we have $r \cdot v = v$.
    \item \label{lem-R-5} For any distinct elements $r, t \in S$, we have $\dim V_r^- = \dim V_t^-$ (allowed to be infinity).
    \item \label{lem-R-6} For any $s \in S$, we have $V_s^- \ne 0$.
  \end{enumerate}
\end{lemma}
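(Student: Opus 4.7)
The plan is to treat the six parts in order, with most following by short direct arguments and the one nontrivial step being part \ref{lem-R-5}.

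For \ref{lem-R-1}, a one-line computation suffices: $s \cdot (s \cdot v - v) = v - s \cdot v = -(s \cdot v - v)$.

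For \ref{lem-R-2}, let $U := \sum_{s \in S} V_s^-$. By \ref{lem-R-1}, $s \cdot v - v \in U$ for all $v \in V$ and $s \in S$, so $s \cdot v \in v + U$; in particular $U$ is $W$-invariant. By irreducibility, either $U = 0$ or $U = V$. If $U = 0$, then every $s$ acts as the identity, so $V$ is trivial, forcing $\af$-function value $0$ by Example \ref{eg-a=0}, contradicting the hypothesis. So $U = V$.

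For \ref{lem-R-3}, this is just Theorem \ref{thm-A1}: since $(W,S)$ is simply laced, $m_{rt} \in \{2,3\}$ is finite, and a common $-1$-eigenvector for $r,t$ would contradict the $\af$-function value $1$ assumption.

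For \ref{lem-R-4}, since $m_{rt}=2$ the operators $\rho(r),\rho(t)$ commute and their joint eigenspaces decompose $V$. Any $v \in V_t^-$ decomposes as $v = v_+ + v_-$ with $v_\pm \in V_r^\pm$; commutativity forces $v_\pm \in V_t^-$, and then $v_- \in V_r^- \cap V_t^- = 0$ by \ref{lem-R-3}. Hence $v = v_+$, i.e., $r \cdot v = v$.

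For \ref{lem-R-5}, the main input is the dihedral decomposition in Subsection \ref{subsec-dih}. When $m_{rt} = 3$, restrict $V$ to the subgroup $\langle r,t \rangle \simeq D_3$ and decompose into $D_3$-isotypic components. By the $\af$-value $1$ hypothesis combined with Lemma \ref{lem-Cwrt} (applied as in Lemma \ref{lem-Cwrt=0}, or directly by \ref{lem-R-3}), the sign representation $\varepsilon$ cannot appear, so $V|_{\langle r,t\rangle}$ is a sum of copies of $\II$ and $\rho_1$. By Remark \ref{rmk-Dm-rep}, the $-1$-eigenspace of $r$ and of $t$ on $\rho_1$ are each one-dimensional, and both are zero on $\II$; hence the multiplicity of $\rho_1$ equals both $\dim V_r^-$ and $\dim V_t^-$. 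Thus $\dim V_r^- = \dim V_t^-$ whenever $r,t$ are adjacent in the Coxeter graph $G$. Since $(W,S)$ is irreducible, $G$ is connected, and in the simply laced case connectivity is via edges with $m_{st}=3$, so chaining the equality along a path gives $\dim V_r^- = \dim V_t^-$ for \emph{any} distinct $r,t \in S$. This step is the only delicate one: the case $m_{rt}=2$ does not directly yield the equality, and it is essential to route through $m=3$ edges using connectedness of the graph.

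For \ref{lem-R-6}, combine \ref{lem-R-2} and \ref{lem-R-5}: since $V \ne 0$ equals $\sum_s V_s^-$, some $V_s^-$ is nonzero, and by equal dimensions all are nonzero.
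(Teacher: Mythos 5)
Your proposal is correct and follows essentially the same route as the paper: the same invariant-subspace argument for (2), the same appeal to Theorem \ref{thm-A1} for (3), the same $D_3$-isotypic decomposition plus connectedness-of-the-graph chaining for (5), and the same conclusions for (1) and (6). The only cosmetic difference is in (4), where you use the joint eigenspace decomposition of the commuting involutions rather than directly checking that $r\cdot v - v$ lies in $V_t^-$; both reduce to the same use of part (3).
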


\begin{proof}
  For \eqref{lem-R-1}, we have $s \cdot (s \cdot v- v) = v - s \cdot v$.
  Thus, $s \cdot v - v \in V_s^-$.

  Let $U := \sum_{s \in S} V_s^-$.
  For any vector $u \in U$ and any $s \in S$, we have $s \cdot u - u \in V_s^-$ by \eqref{lem-R-1}.
  Thus, $s \cdot u \in U$.
  So, $U$ stays invariant under the action of $W$.
  But $V$ is an irreducible representation of $W$.
  As a result, we must have $U = V$.
  The point \eqref{lem-R-2} is proved.

  The assertion $V_r^- \cap V_t^- = 0$ in \eqref{lem-R-3} follows from Theorem \ref{thm-A1} and the fact that $V$ is of $\af$-function value $1$.

  For \eqref{lem-R-4}, suppose $m_{rt} = 2$ and $v \in V_t^-$.
  We have
  \begin{align*}
    t \cdot (r \cdot v - v) & = tr \cdot v - t \cdot v \\
     & = rt \cdot v + v \\
     & = - r \cdot v + v.
  \end{align*}
  Thus, $r \cdot v - v \in V_t^-$.
  But it also holds that $r \cdot v - v \in V_r^-$ by \eqref{lem-R-1}.
  This forces $r \cdot v - v = 0$ because $V_r^- \cap V_t^- = 0$ as we have seen in \eqref{lem-R-3}.

  Now we prove \eqref{lem-R-5}.
  Suppose first that $m_{rt} = 3$.
  The subgroup $\langle r,t \rangle$ generated by $r$ and $t$ is isomorphic to the finite dihedral group $D_3$.
  Note that the group algebra $\bC[D_3]$ is semisimple.
  Therefore, $V$ decomposes as a representation of $\langle r,t \rangle$ into a direct sum of copies of irreducible representations of $\langle r,t \rangle$, which are $\II, \varepsilon$ and $\rho_1$ (see Lemma \ref{lem-Dm-rep}).
  But $\varepsilon$ cannot occur in $V$ by Theorem \ref{thm-A1}.
  Thus, we may write
  \begin{equation*}
    (V, \rho) =  \II^{\oplus I}  \bigoplus \rho_1^{\oplus J}
  \end{equation*}
  as a representation of $\langle r,t \rangle$, where $I$ and $J$ are index sets.
  In view of Remark \ref{rmk-Dm-rep}, we have clearly $\dim V_r^- = \dim V_t^- = \abs{J}$.

  Suppose otherwise $m_{rt} = 2$.
  Notice that the Coxeter graph $G$ is assumed to be connected.
  We have a path in $G$ connecting $r$ and $t$, say,
  \begin{equation*}
    (r = s_0, s_1, \dots, s_{k-1}, s_k = t)
  \end{equation*}
  where $s_i \in S$ and $m_{s_i s_{i+1}} = 3$ for any $i$.
  Then, we have $\dim V_r^- = \dim V_{s_1}^- = \dots = \dim V_t^-$ as desired in \eqref{lem-R-5}.

  As a result of \eqref{lem-R-5}, if $V_s^- = 0$ for some $s \in S$, then $V_s^- = 0$ for any $s \in S$, and hence the action of $s$ on $V$ is trivial.
  But then the $\af$-function value of $V$ is zero (see Example \ref{eg-a=0}), which contradicts our assumption on $V$.
  This proves \eqref{lem-R-6}.
\end{proof}

For two arbitrary elements $r, t \in S$ such that $m_{rt} = 3$, we define a linear map $f_{tr}$ as follows,
\begin{align*}
  f_{tr} : V_r^- & \to V_t^-,  \\
   v & \mapsto  t \cdot v - v.
\end{align*}
Note that $t \cdot v - v \in V_t^-$ by Lemma \ref{lem-R}\eqref{lem-R-1}.
Thus, $f_{tr}: V_r^- \to V_t^-$ is a well-defined linear map.
Intuitively, the vector $v$ generates in $V$ a two-dimensional subrepresentation of the dihedral subgroup $\langle r,t \rangle$, and the subrepresentation is isomorphic to $\rho_1$.
In this subrepresentation the vector $v$ and $f_{tr} (v)$ play the same roles as $\beta_r$ and $\beta_t$ in Subsection \ref{subsec-dih}, as illustrated in Figure \ref{fig-ftr}.
\begin{figure}[ht]
    \centering
    \begin{tikzpicture}
      \draw[dashed] (-1.3,0)--(2,0);
      \draw[dashed] (240:1.3)--(60:2);
      \draw[->] (0,0)--(0,1.5);
      \draw[->] (0,0)--(330:1.5);
      \draw[->] (0,0)--(30:1.5);
      \draw (-0.3,0) arc (180:240:0.3);
      \draw[<->] ($(1.9,0) + (300:0.2)$) arc (300:420:0.2);
      \draw[<->] ($(60:1.9) + (0.2,0)$) arc (0:120:0.2);
      \node[left] (ar) at (0,1.5) {$v$};
      \node[below] (at) at (2.1,-0.8) {${f_{tr} (v) = t \cdot v - v}$};
      \node[right] (r) at (2.1,0) {$r$};
      \node[right] (t) at (63:2.3) {$t$};
      \node[right] (tv) at (33:1.5) {$t \cdot v$};
      \node[left] (angle) at (-0.2,-0.3) {$\frac{\uppi}{3}$};
    \end{tikzpicture}
    \caption{The vectors $v$ and $f_{tr} (v)$}\label{fig-ftr}
\end{figure}

\begin{lemma} \label{lem-fst-isom}
  For $r,t \in S$ with $m_{rt} = 3$, the map $f_{tr}$ is a linear isomorphism with inverse $f_{rt}$.
\end{lemma}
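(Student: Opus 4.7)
The plan is to exploit exactly the same decomposition of $V|_{\langle r,t\rangle}$ that was used in the proof of Lemma \ref{lem-R} \ref{lem-R-5}. Since $m_{rt} = 3$, the subgroup $\langle r,t \rangle$ is isomorphic to $D_3$, whose irreducible representations are $\II, \varepsilon, \rho_1$ by Lemma \ref{lem-Dm-rep}. Because $V$ has $\af$-function value $1$, Theorem \ref{thm-A1} forbids $\varepsilon$ from occurring, so as a representation of $\langle r,t \rangle$ one has $V \simeq \II^{\oplus I} \oplus \rho_1^{\oplus J}$ for some index sets $I, J$.

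Next I would locate $V_r^-$ and $V_t^-$ inside this decomposition. On the $\II$-isotypic component both $r$ and $t$ act trivially, contributing nothing to either $V_r^-$ or $V_t^-$. On each copy of $\rho_1$, Remark \ref{rmk-Dm-rep} together with the explicit formulas \eqref{eq-Dm-rho-k} identifies $V_r^-$ with the line $\bC\beta_r$ and $V_t^-$ with the line $\bC\beta_t$ in that copy. Hence $V_r^- = \bigoplus_j \bC \beta_r^{(j)}$ and $V_t^- = \bigoplus_j \bC \beta_t^{(j)}$, summed over the copies $j \in J$ of $\rho_1$.

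Then I would compute $f_{tr}$ and $f_{rt}$ directly on these basis vectors using \eqref{eq-Dm-rho-k} with $m=3$, $k=1$: since $2\cos\frac{\uppi}{3} = 1$, we get $t\cdot\beta_r^{(j)} = \beta_r^{(j)} + \beta_t^{(j)}$ and $r\cdot\beta_t^{(j)} = \beta_t^{(j)} + \beta_r^{(j)}$, so $f_{tr}(\beta_r^{(j)}) = \beta_t^{(j)}$ and $f_{rt}(\beta_t^{(j)}) = \beta_r^{(j)}$. Extending linearly yields $f_{rt}\circ f_{tr} = \Id_{V_r^-}$ and $f_{tr}\circ f_{rt} = \Id_{V_t^-}$, establishing the claim.

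There is no genuine obstacle here; the only thing to be careful about is making sure that $V_r^-$ really sits inside the $\rho_1$-isotypic component (which is immediate from $\varepsilon \not\subseteq V$ and the trivial action of $r$ on the $\II$-component). A purely algebraic alternative would manipulate $r t r = t r t$ to express $rt\cdot v$ in terms of $t\cdot v$ and $v$ for $v \in V_r^-$, but passing through the $\langle r,t\rangle$-decomposition is cleaner and reuses machinery already developed in the paper.
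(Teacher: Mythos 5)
Your proof is correct, but it takes a genuinely different route from the paper's. The paper's own argument is precisely the ``purely algebraic alternative'' you mention at the end: for $v \in V_r^-$ it uses the braid relation $trt = rtr$ to check that $rt \cdot v + v - t \cdot v$ is a $(-1)$-eigenvector of both $r$ and $t$, hence lies in $V_r^- \cap V_t^- = 0$ by Lemma \ref{lem-R} \ref{lem-R-3}; the resulting identity $rt \cdot v = t\cdot v - v$ then makes $f_{rt} f_{tr}(v) = v$ a two-line computation. Your argument instead decomposes $V|_{\langle r,t\rangle}$ into $D_3$-isotypic pieces --- legitimate even for infinite-dimensional $V$ by semisimplicity of $\bC[D_3]$ --- and correctly locates $V_r^-$ and $V_t^-$ inside the $\rho_1$-isotypic part, where $2\cos\frac{\uppi}{3} = 1$ gives $f_{tr}(\beta_r^{(j)}) = \beta_t^{(j)}$ and $f_{rt}(\beta_t^{(j)}) = \beta_r^{(j)}$. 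Both proofs hinge on the same input from Theorem \ref{thm-A1}: excluding $\varepsilon$ from the decomposition is exactly the statement $V_r^- \cap V_t^- = 0$. What your version buys is a transparent picture of $f_{tr}$ as the copy-by-copy map $\beta_r^{(j)} \mapsto \beta_t^{(j)}$, reusing the decomposition already set up in the proof of Lemma \ref{lem-R} \ref{lem-R-5}; what the paper's version buys is independence from the classification of irreducible $D_3$-representations and a reusable explicit identity. One small point worth making explicit in your write-up: the basis $\{\beta_r^{(j)}, \beta_t^{(j)}\}$ realizing \eqref{eq-Dm-rho-k} in each copy of $\rho_1$ is only determined up to a common scalar, but this is harmless since $f_{tr}$ and $f_{rt}$ are defined intrinsically and you are merely evaluating them in a convenient basis.
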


\begin{proof}
  For any $v \in V_r^-$, we have $rt \cdot v + v - t\cdot v \in V_r^- \cap V_t^-$ because
  \begin{align*}
    r \cdot (rt \cdot v + v - t\cdot v) & = t \cdot v + r \cdot v - rt \cdot v \\
     & =  t \cdot v - v - rt \cdot v \\
     & = - (rt \cdot v + v - t\cdot v), \text{ and} \\
    t \cdot (rt \cdot v + v - t\cdot v) & = trt \cdot v + t \cdot v - v \\
    & = rtr \cdot v + t \cdot v - v \\
    & = -rt \cdot v + t \cdot v - v \\
    & = - (rt \cdot v + v - t\cdot v).
  \end{align*}
  But $V_r^- \cap V_t^- = 0$ by Lemma \ref{lem-R}\eqref{lem-R-3}.
  Thus,
  \begin{equation}    \label{eq-4-3}
    rt \cdot v + v - t\cdot v = 0.
  \end{equation}
  We have then
  \begin{align*}
    f_{rt} f_{tr} (v) & = f_{rt} (t \cdot v - v) \\
     & = r \cdot (t\cdot v - v) - (t \cdot v - v) \\
     & = r t \cdot v + v - t \cdot v + v \\
    & = v \text{ (by Equation \eqref{eq-4-3})}.
  \end{align*}
  We interchange the two letters $r$ and $t$ and then we have $f_{tr} f_{rt} (v) = v$  for any $v \in V_t^-$.
\end{proof}

We have seen from Lemma \ref{lem-R}\eqref{lem-R-2} and \eqref{lem-R-3} that the representation $(V,\rho)$ satisfies the conditions \eqref{def-R-1} and \eqref{def-R-2} of Definition \ref{def-R}.
It remains to prove the condition \eqref{def-R-3} of Definition \ref{def-R}.
We split the proof into two cases depending on the shape of the Coxeter graph $G$.

\subsubsection{Case one: \texorpdfstring{$G$}{G} is a tree}

In this case, we fix arbitrarily an element $s_0 \in S$ and a nonzero vector $\alpha_{s_0} \in V_{s_0}^-$.
Such $\alpha_{s_0}$ exists because $V_{s_0}^- \ne 0$ by Lemma \ref{lem-R}\eqref{lem-R-6}.

For any vertex in $G$, say, $s \in S$, there is a unique path (without repetition of vertices) connecting $s_0$ and $s$ since $G$ is a tree.
Suppose this path is
\begin{equation*}
    (s_0, s_1, \dots, s_{k-1}, s_k = s)
\end{equation*}
where $s_i \in S$ and $m_{s_i s_{i+1}} = 3$ for any $i$.
We define
\begin{equation*}
  \alpha_s := f_{s_k s_{k-1}} f_{s_{k-1} s_{k-2}} \cdots f_{s_1 s_0} (\alpha_{s_0}).
\end{equation*}
Then, $\alpha_s \in V_s^-$, and $\alpha_s$ is nonzero by Lemma \ref{lem-fst-isom}.
In particular, if $s = s_0$, then the vector is $\alpha_{s_0}$ we have chosen.

\begin{lemma} \label{lem-tree-subrep} \leavevmode
  \begin{enumerate}
    \item \label{lem-tree-subrep-1} Suppose $s, t \in S$ such that $m_{st} = 3$.
        Then, we have $t \cdot \alpha_s = \alpha_s + \alpha_t$.
    \item \label{lem-tree-subrep-2} The subspace $U := \sum_{s \in S} \bC \alpha_s$ spanned by $\{\alpha_s \mid s \in S\}$ is a subrepresentation of $W$ in $V$.
  \end{enumerate}
\end{lemma}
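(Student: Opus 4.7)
The plan is to establish part (1) by a case analysis on whether the neighbour $t$ of $s$ lies on the unique tree-path from $s_0$ to $s$, and then to deduce part (2) immediately from part (1) together with the elementary facts collected in Lemma \ref{lem-R}.

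\textbf{First}, for part (1), fix $s, t \in S$ with $m_{st} = 3$, and let $(s_0, s_1, \dots, s_k = s)$ be the unique path in $G$ from $s_0$ to $s$. Since $G$ is a tree, exactly one of the following occurs: either $t$ is off this path, in which case appending $t$ yields the unique path from $s_0$ to $t$; or $t$ lies on this path, and being adjacent to the endpoint $s_k = s$ forces $t = s_{k-1}$. In the first case, the recursive definition gives directly
\[
  \alpha_t = f_{ts}(\alpha_s) = t\cdot \alpha_s - \alpha_s,
\]
so $t\cdot\alpha_s = \alpha_s + \alpha_t$ as claimed. In the second case, the same recursion gives $\alpha_s = f_{st}(\alpha_t) = s\cdot\alpha_t - \alpha_t$, and I then invoke Lemma \ref{lem-fst-isom} which guarantees that $f_{ts}$ inverts $f_{st}$, yielding $\alpha_t = f_{ts}(\alpha_s) = t\cdot\alpha_s - \alpha_s$ and hence the same conclusion. (Alternatively, the braid relation $sts = tst$ applied to $\alpha_t$, combined with identity \eqref{eq-4-3} in the proof of Lemma \ref{lem-fst-isom}, gives the desired equality by a direct calculation; but the inverse-map route is cleaner.)

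\textbf{Second}, for part (2), I must verify $s\cdot U \subseteq U$ for every $s \in S$, and it suffices to show $s\cdot\alpha_{s'} \in U$ for every $s' \in S$. There are three mutually exclusive cases, all handled by results already in hand:
\begin{itemize}
  \item If $s = s'$, then $s\cdot\alpha_{s'} = -\alpha_{s'} \in U$ by the very definition $\alpha_{s'} \in V_{s'}^-$.
  \item If $s \ne s'$ and $m_{ss'} = 2$, then Lemma \ref{lem-R}\,\ref{lem-R-4} gives $s\cdot\alpha_{s'} = \alpha_{s'} \in U$.
  \item If $s \ne s'$ and $m_{ss'} = 3$, then part (1) (which I just proved, applied with the roles of $s$ and $t$ in (1) being $s'$ and $s$ respectively) gives $s\cdot\alpha_{s'} = \alpha_{s'} + \alpha_s \in U$.
\end{itemize}
Since $(W,S)$ is simply laced, these three cases are exhaustive, so $U$ is $W$-stable.

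\textbf{Main obstacle.} The only nontrivial point is Case~A of part (1) (the case $t = s_{k-1}$), where the definition of $\alpha_s$ only gives information about $s$ acting on $\alpha_t$, not about $t$ acting on $\alpha_s$. The key leverage is Lemma \ref{lem-fst-isom}, which says $f_{st}$ and $f_{ts}$ are mutual inverses on $V_t^-$ and $V_s^-$; this converts the relation $\alpha_s = s\cdot\alpha_t - \alpha_t$ into the symmetric relation $\alpha_t = t\cdot\alpha_s - \alpha_s$. Once this symmetry is in hand, everything else is bookkeeping.
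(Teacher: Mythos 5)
Your proposal is correct and follows essentially the same route as the paper: part (1) by splitting into the case where $t$ precedes $s$ on the tree-path (handled via Lemma \ref{lem-fst-isom}, $f_{ts}f_{st} = \mathrm{id}$) and the case where appending $t$ gives the path defining $\alpha_t$; part (2) by the same three-case check using Lemma \ref{lem-R}\,\ref{lem-R-4} and part (1). Your explicit observation that adjacency of $t$ to the endpoint $s$ forces $t = s_{k-1}$ when $t$ lies on the path is a point the paper leaves implicit, but the arguments are the same.
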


\begin{proof}
  Suppose $m_{st} = 3$, and suppose the path in $G$ connecting $s_0$ and $s$ is
  \begin{equation*}
    (s_0, s_1, \dots, s_{k-1}, s).
  \end{equation*}
  If $s_{k-1} = t$, then we have
  \begin{align*}
    t \cdot \alpha_s & = (t \cdot \alpha_s - \alpha_s) + \alpha_s \\
     & = f_{ts} (\alpha_s) + \alpha_s \\
     & = f_{ts} (f_{st} \alpha_{t}) + \alpha_s \text{ (by definition of $\alpha_s$ and $\alpha_t$)} \\
    & = \alpha_t + \alpha_s \text{ (by Lemma \ref{lem-fst-isom})}.
  \end{align*}
  If $s_{k-1} \ne t$, then the path in $G$ connecting $s_0$ and $t$ is exactly
  \begin{equation*}
    (s_0, s_1, \dots, s_{k-1}, s,  t).
  \end{equation*}
  Then, we have
  \begin{align*}
    t \cdot \alpha_s & = (t \cdot \alpha_s - \alpha_s) + \alpha_s \\
     & = f_{ts} (\alpha_s) + \alpha_s \\
    & = \alpha_t + \alpha_s \text{ (by the definition of $\alpha_t$)}.
  \end{align*}
  Therefore, we always have $t \cdot \alpha_s = \alpha_t + \alpha_s$.
  The point \eqref{lem-tree-subrep-1} is proved.

  For \eqref{lem-tree-subrep-2}, it suffices to show that $t \cdot \alpha_s \in U$ for any $s,t \in S$.
  If $t = s$ then this is clear since $s \cdot  \alpha_s = - \alpha_s \in U$.
  If $m_{st} = 2$ then $t \cdot \alpha_s = \alpha_s \in U$ by Lemma \ref{lem-R}\eqref{lem-R-4}.
  If $m_{st} = 3$ then $t \cdot \alpha_s = \alpha_t + \alpha_s \in U$ by \eqref{lem-tree-subrep-1}.
  Thus, $U$ is a subrepresentation.
\end{proof}

\begin{corollary} \label{cor-tree}
  $\dim V_s^- = 1$ for any $s \in S$.
\end{corollary}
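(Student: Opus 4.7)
The plan is to combine the subrepresentation structure of $U$ from Lemma \ref{lem-tree-subrep} with the averaging projector onto the $-1$-eigenspace of $s$. First, since $V$ is irreducible and $U = \sum_{t \in S} \mathbb{C}\alpha_t$ is a nonzero subrepresentation (it contains $\alpha_{s_0} \neq 0$), we must have $V = U$.

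Fix $s \in S$ and consider the idempotent $\pi_s := \frac{1 - s}{2} \in \mathbb{C}[W]$ acting on $V$. Since $s$ is an involution, $\pi_s$ is a projector onto $V_s^-$ which is the identity on $V_s^-$, and consequently
\begin{equation*}
  V_s^- = \pi_s(V) = \pi_s(U) = \sum_{t \in S} \mathbb{C} \cdot \pi_s(\alpha_t).
\end{equation*}
I would then compute $\pi_s(\alpha_t)$ for each $t \in S$ using the already-established action formulas: by definition $\pi_s(\alpha_s) = \alpha_s$; by Lemma \ref{lem-R} \ref{lem-R-4}, if $m_{st} = 2$ then $\pi_s(\alpha_t) = 0$; and by Lemma \ref{lem-tree-subrep} \ref{lem-tree-subrep-1}, if $m_{st} = 3$ then $s \cdot \alpha_t = \alpha_s + \alpha_t$, so $\pi_s(\alpha_t) = -\tfrac{1}{2}\alpha_s$. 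In every case $\pi_s(\alpha_t) \in \mathbb{C}\alpha_s$, so the sum above collapses to $V_s^- \subseteq \mathbb{C}\alpha_s$.

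Combined with the fact that $\alpha_s \in V_s^-$ is nonzero (Lemma \ref{lem-fst-isom}, together with $\alpha_{s_0} \neq 0$), this forces $V_s^- = \mathbb{C}\alpha_s$, and in particular $\dim V_s^- = 1$. There is no real obstacle: the key observation is simply that the explicit action of $s$ on the spanning vectors $\{\alpha_t\}$ makes the projection of each $\alpha_t$ onto $V_s^-$ automatically land in the line $\mathbb{C}\alpha_s$, which is specific to the tree case (in the cycle case one would expect an extra relation to appear, deforming this picture).
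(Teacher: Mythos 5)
Your proof is correct, and it differs from the paper's in a small but genuine way. The paper also reduces to $V = \sum_{t \in S} \bC\alpha_t$ via irreducibility, but then it extracts a basis $\{\alpha_s, \alpha_{s_1}, \dots, \alpha_{s_l}\}$ of $V$ from the spanning set, expands an arbitrary $v \in V_s^-$ in that basis, and compares coefficients on both sides of $s \cdot v = -v$: the action formulas force all coefficients $c_i$ with $i \ne s$ to satisfy $c_i = -c_i$, hence vanish. You instead apply the idempotent $\pi_s = \tfrac{1}{2}(1-s)$ to the spanning set directly, using the identity $\pi_s(V) = V_s^-$ (valid since $s$ is an involution and $V = V_s^+ \oplus V_s^-$), and observe that each $\pi_s(\alpha_t)$ lands in $\bC\alpha_s$ by the same action formulas (Lemma \ref{lem-R} \ref{lem-R-4} for $m_{st}=2$ and Lemma \ref{lem-tree-subrep} \ref{lem-tree-subrep-1} for $m_{st}=3$). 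The inputs are identical, but your route is basis-free: you never need to worry about which of the (possibly linearly dependent) vectors $\alpha_t$ form a basis, which makes the argument marginally cleaner. Both proofs conclude the same way, from $\alpha_s \ne 0$ and $\alpha_s \in V_s^-$. Your closing remark about the cycle case is also on target: there the analogous projection argument does not collapse $V_{s_0}^-$ onto a line, which is why the paper has to bring in the monodromy operator $X$ and $\bC[X^{\pm 1}]$-submodules.
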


\begin{proof}
  Since $V$ is irreducible, we have $V = \sum_{s \in S} \bC \alpha_s$ by Lemma \ref{lem-tree-subrep}\eqref{lem-tree-subrep-2}.
  Notice that $\alpha_s \in V_s^-$ is a nonzero vector.
  So $\dim V_s^- \ge 1$.

  For arbitrarily fixed $s \in S$, we may suppose $s_1, \dots, s_l$ are elements in $S$ such that $\{\alpha_s, \alpha_{s_1}, \dots, \alpha_{s_l}\}$ is a basis of $V$.
  Suppose $v \in V_s^-$.
  Then, we can write
  \begin{equation}   \label{eq-cor-tree-1}
    v = c_s \alpha_s + \sum_{1 \le i \le l} c_i \alpha_{s_i},
  \end{equation}
  where $c_s, c_{i} \in \bC$.
  By Lemma \ref{lem-R}\eqref{lem-R-4} and Lemma \ref{lem-tree-subrep}\eqref{lem-tree-subrep-1}, we have
  \begin{equation}   \label{eq-cor-tree-2}
   \begin{split}
    -v & = s \cdot v \\
     & = - c_s \alpha_s + \sum_{\substack{1 \le i \le l \\ m_{ss_i} = 2}} c_i \alpha_{s_i} + \sum_{\substack{1 \le i \le l \\ m_{ss_i} = 3}} c_i (\alpha_{s_i} + \alpha_s) \\
     & = \Bigl(- c_s + \sum_{\substack{1 \le i \le l \\ m_{ss_i} = 3}} c_i \Bigr) \alpha_s + \sum_{1 \le i \le l} c_i \alpha_{s_i}.
    \end{split}
  \end{equation}
  Compare the equations \eqref{eq-cor-tree-1} and \eqref{eq-cor-tree-2}, and notice that $\{\alpha_s, \alpha_{s_1}, \dots, \alpha_{s_l}\}$ is a basis of $V$.
  Then, we must have $c_1 = \dots = c_l = 0$.
  Thus, $v \in \bC \alpha_s$, and then $V_s^- = \bC \alpha_s$ is of dimension $1$.
\end{proof}

In conclusion, in the case where $G$ is a tree, the representation $V$ is an R-representation.

\subsubsection{Case two: \texorpdfstring{$G$}{G} has one cycle} \label{subsubsec-cycle}

In this case, $G$ has (only) one cycle.
We assume $\{s_{i} \in S \mid 0 \le i \le n\}$ is the set of vertices on that cycle such that $m_{s_{i} s_{i+1}} = 3$ for any $i \le n-1$ and $m_{s_0s_n} = 3$, as illustrated in Figure \ref{fig-cycle}.
\begin{figure} [ht]
  \centering
  \begin{tikzpicture}
    \node [circle, draw, inner sep=2pt, label=below:$s_0$] (s0) at (0,0) {};
    \node [circle, draw, inner sep=2pt, label=below:$s_1$] (s1) at (-1,0) {};
    \node [circle, draw, inner sep=2pt, label=below:$s_n$] (sn) at (1,0) {};
    \node [circle, draw, inner sep=2pt, label=below:$s_{n-1}$] (sm) at (2,0) {};
    \node [circle, draw, inner sep=2pt, label=left:$s_2$] (s2) at (-1,1) {};
    \node [circle, draw, inner sep=2pt, label=right:$s_{n-2}$] (sm2) at (2,1) {};
    \draw (-0.3,1) -- (s2) -- (s1) -- (s0) -- (sn) -- (sm) -- (sm2) -- (1.3,1);
    \node (d) at (0.5, 1) {$\cdots$};
  \end{tikzpicture}
  \caption{The cycle in $G$}\label{fig-cycle}
\end{figure}

For any vector $v \in V_{s_0}^-$, we define
\begin{equation*}
  X (v) : = f_{s_0 s_n} f_{s_n s_{n-1}} \cdots f_{s_2 s_1} f_{s_1 s_0} (v).
\end{equation*}
Then, by Lemma \ref{lem-fst-isom}, $X : V_{s_0}^- \to V_{s_0}^-$ is a linear isomorphism of the vector space $V_{s_0}^-$.
Thus, we may regard $V_{s_0}^-$ as a module of the Laurent polynomial ring $\bC[X^{\pm 1}]$.

Suppose $U_{s_0} \subseteq V_{s_0}^-$ is a submodule of $\bC[X^{\pm 1}]$.
For any $t \in S$, we define a subspace $U_t \subseteq V_t^-$ as follows.
We choose a path (without repetition of vertices) in $G$ connecting $s_0$ and $t$, say,
\begin{equation*}
  (s_0 = t_0, t_1, t_2, \dots,t_{k-1}, t_k = t),
\end{equation*}
where $t_1, \dots, t_{k-1} \in S$ such that $m_{t_i t_{i+1}} = 3$ for any $i$.
We define
\begin{equation*}
  U_t := f_{t_k t_{k-1}} f_{t_{k-1} t_{k-2}} \dots f_{t_1 t_0} (U_{s_0}).
\end{equation*}
In particular, if $t = s_0$, then the subspace defined is just $U_{s_0}$.
Note that the path connecting $s_0$ and $t$ is not unique in general.
But the point \eqref{lem-cycle-wd-1} of the following lemma ensures that $U_t$ is well defined.

\begin{lemma} \label{lem-cycle-wd} \leavevmode
  \begin{enumerate}
    \item \label{lem-cycle-wd-1} The definition of $U_t$ is independent of the choice of the path connecting $s_0$ and $t$.
    \item \label{lem-cycle-wd-2} If $r,t \in S$ such that $m_{rt} =3$, then $f_{rt} (U_t) = U_r$.
  \end{enumerate}
\end{lemma}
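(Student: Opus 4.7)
The plan is to use the unique-cycle assumption on $G$ to classify simple paths from $s_0$ to any vertex. Concretely, in a connected graph with exactly one cycle, any simple path from $s_0$ to a vertex $t$ decomposes as a portion traversing part of the cycle (from $s_0$ to the cycle-vertex $c(t)$ nearest to $t$) followed by the unique tree-path from $c(t)$ to $t$; the cycle portion admits at most two choices, corresponding to the two directions around the cycle. Thus there are at most two simple paths from $s_0$ to $t$, and when there are two, concatenating one with the reverse of the other traces out one full trip around the cycle.

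For part (1), this combinatorial statement translates into the following: if $P_1, P_2$ are two simple paths from $s_0$ to $t$ with associated compositions $F_{P_1}, F_{P_2} : V_{s_0}^- \to V_t^-$, then the endomorphism $F_{P_2}^{-1} F_{P_1}$ of $V_{s_0}^-$ equals $X^{\pm 1}$, where $X$ is the loop operator defined just before the lemma. Since $U_{s_0}$ is by hypothesis a $\bC[X^{\pm 1}]$-submodule of $V_{s_0}^-$, we have $X^{\pm 1}(U_{s_0}) = U_{s_0}$, and hence $F_{P_1}(U_{s_0}) = F_{P_2}(U_{s_0})$, which shows that $U_t$ is well-defined.

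For part (2), I will use (1) to choose a convenient path. Suppose first that the edge $\{r, t\}$ is not on the cycle; then it is a bridge of $G$, and removing it splits $G$ into two connected components, one containing $r$ and the other containing $t$. The vertex $s_0$ lies in one of them --- say, by symmetry, in the component of $r$ --- and I can pick a simple path $P$ from $s_0$ to $r$ lying entirely in that component, which automatically avoids $t$. Extending $P$ by the edge $r \to t$ yields a simple path from $s_0$ to $t$ whose associated composition is $f_{tr} \circ F_P$, giving $U_t = f_{tr}(U_r)$; by Lemma \ref{lem-fst-isom} this is equivalent to $U_r = f_{rt}(U_t)$. If instead the edge $\{r, t\}$ lies on the cycle, say $\{r, t\} = \{s_i, s_{i+1}\}$ with indices taken cyclically, then the path $s_0 \to s_1 \to \dots \to s_i \to s_{i+1}$ along the cycle realises the analogous relation; the boundary case $s_0 \in \{r, t\}$ is immediate from the definition.

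The main obstacle I expect is formalising the path-classification underlying (1): the graph-theoretic fact that in a graph with exactly one cycle, any two simple paths between two fixed vertices differ by at most one full traversal of the cycle, and that this traversal corresponds exactly to $X^{\pm 1}$ on $V_{s_0}^-$. Once this combinatorial preparation is laid out, the module-theoretic content is purely formal, and part (2) follows by exploiting the freedom of path choice provided by (1).
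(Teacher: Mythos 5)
Your proposal is correct and follows essentially the same route as the paper: both arguments reduce part (1) to the fact that the two possible simple paths differ by one traversal of the cycle, whose associated composite is $X^{\pm 1}$, so the submodule hypothesis $X(U_{s_0}) = U_{s_0}$ together with Lemma \ref{lem-fst-isom} gives well-definedness; and both prove part (2) by choosing, via part (1), a path to one of $r,t$ that extends by the edge to the other. Your phrasing via the operator identity $F_{P_2}^{-1}F_{P_1} = X^{\pm 1}$ and your case split on whether the edge $\{r,t\}$ lies on the cycle are only cosmetic variations of the paper's argument.
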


\begin{proof}
  We prove \eqref{lem-cycle-wd-1} first.
  If there is only one path connecting $s_0$ and $t$, then there is nothing to prove.
  If $t = s_i$ for some $i \le n$, then there are two paths connecting $s_0$ and $s_i$, namely,
  \begin{equation}   {\label{eq-lem-wd-1}}
    (s_0, s_1, s_2, \dots, s_{i-1}, s_i) \text{ and } (s_0, s_n, s_{n-1}, \dots, s_{i+1}, s_i).
  \end{equation}
  Recall that $U_{s_0} \subseteq V_{s_0}^-$ is a submodule of $\bC[X^{\pm 1}]$.
  Thus, $X(U_{s_0}) = U_{s_0}$, that is,
  \begin{equation*}
    f_{s_0 s_n} f_{s_n s_{n-1}} \cdots f_{s_2 s_1} f_{s_1 s_0} (U_{s_0}) = U_{s_0}.
  \end{equation*}
  By Lemma \ref{lem-fst-isom}, we have then
  \begin{equation}   \label{eq-lem-wd-2}
    f_{s_i s_{i-1}} \cdots f_{s_2 s_1} f_{s_1 s_0} (U_{s_0}) = f_{s_i s_{i+1}} \cdots f_{s_{n-1} s_n} f_{s_n s_0} (U_{s_0})
  \end{equation}
  Notice that the two paths in \eqref{eq-lem-wd-1} are the only paths connecting $s_0$ and $s_i$ because there is only one cycle in $G$.
  Thus, the equality \eqref{eq-lem-wd-2} implies that the subspace $U_{s_i}$ is independent of the choice of the path.

  Suppose otherwise $t$ is not on the cycle and there are two (and only two) paths connecting $s_0$ and $t$.
  Then, there exists a unique index $i$ with $1 \le i \le n$ such that there is a unique path
  \begin{equation*}
    (s_i = r_0, r_1, \dots, r_l = t)
  \end{equation*}
  connecting $s_i$ and $t$, and $r_1, \dots, r_l$ do not lie on the cycle in $G$, as illustrated in Figure \ref{fig-path}.
  Then, by Equation \eqref{eq-lem-wd-2} we have
  \begin{align*}
     & \mathrel{\phantom{=}} f_{r_l r_{l-1}} \cdots f_{r_2 r_1} f_{r_1 s_i} f_{s_i s_{i-1}} \cdots f_{s_2 s_1} f_{s_1 s_0} (U_{s_0})
    \\ & = f_{r_l r_{l-1}} \cdots f_{r_2 r_1} f_{r_1 s_i} f_{s_i s_{i+1}} \cdots f_{s_{n-1} s_n} f_{s_n s_0} (U_{s_0}).
  \end{align*}
  This indicates that the definition of  $U_t$ is independent of the choice of the path.
  The point \eqref{lem-cycle-wd-1} is proved.
  \begin{figure} [ht]
  \centering
  \begin{tikzpicture}
    \node [circle, draw, inner sep=2pt, label=below:$s_0$] (s0) at (0,0) {};
    \node [circle, draw, inner sep=2pt, label=below:$s_1$] (s1) at (-1,0) {};
    \node [circle, draw, inner sep=2pt, label=below:$s_n$] (sn) at (1,0) {};
    \node (sm) at (1.8,0.25) {$\iddots$};
    \node [circle, draw, inner sep=2pt, label=left:$s_2$] (s2) at (-1,1) {};
    \node [circle, draw, inner sep=2pt, label=225:$s_i$] (sm2) at (2,1) {};
    \draw (-0.3,1) -- (s2) -- (s1) -- (s0) -- (sn) -- (1.5,0);
    \draw (2,0.5) -- (sm2) -- (1.3,1);
    \node (d) at (0.5, 1) {$\cdots$};
    \node [circle, draw, inner sep=2pt, label=below:$r_1$] (r1) at (3,1) {};
    \node [circle, draw, inner sep=2pt, label=below:$r_{l-1}$] (rl) at (4.5,1) {};
    \node [circle, draw, inner sep=2pt] (t) at (5.5,1) {};
    \node [label=below:${r_l = t}$] (rt) at (5.8,1) {};
    \draw (sm2) -- (r1) -- (3.4, 1);
    \draw (t) -- (rl) -- (4.1,1);
    \node (dd) at (3.75,1) {$\cdots$};
  \end{tikzpicture}
  \caption{The path connecting $s_i$ and $t$}\label{fig-path}
  \end{figure}

  For \eqref{lem-cycle-wd-2}, suppose first that there is a path connecting $s_0$ and $t$ such that $r$ does not lie in this path.
  Then, by definition we have $f_{rt} (U_t) = U_r$.
  Suppose otherwise that a path connecting $s_0$ and $t$ passes through $r$, then by definition again we have $f_{tr} (U_r) = U_t$.
  By applying $f_{rt}$ on both sides, and using Lemma \ref{lem-fst-isom}, we obtain $U_r = f_{rt} (U_t)$.
\end{proof}

\begin{lemma}\label{lem-cycle}
  Suppose $U_{s_0} \subseteq V_{s_0}^-$ is a submodule of $\bC[X^{\pm 1}]$.
  Let $U := \sum_{t \in S} U_t \subseteq V$ where $U_t$ is defined as above.
  Then, we have:
  \begin{enumerate}
    \item \label{lem-cycle-1} $U$ is a subrepresentation of $W$ in $V$.
    \item \label{lem-cycle-2} $U \cap V_{s_0}^- = U_{s_0}$.
  \end{enumerate}
\end{lemma}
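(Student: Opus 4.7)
The plan is to prove the two parts in sequence, with part~(1) being routine and part~(2) requiring a projection trick.

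For part~(1), I would check directly that $s \cdot u \in U$ whenever $u \in U_t$ and $s \in S$. Since $u \in V_t^-$, there are only three possibilities for the pair $(s,t)$. If $s = t$, then $s \cdot u = -u \in U_t$. If $s \neq t$ and $m_{st} = 2$, then $s \cdot u = u \in U_t$ by Lemma~\ref{lem-R}~\ref{lem-R-4}. If $s \neq t$ and $m_{st} = 3$, then $s \cdot u = u + f_{st}(u)$, and $f_{st}(u) \in f_{st}(U_t) = U_s$ by Lemma~\ref{lem-cycle-wd}~\ref{lem-cycle-wd-2}; so $s \cdot u \in U_t + U_s \subseteq U$. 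This exhausts the cases since the Coxeter group is simply laced.

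For part~(2), the inclusion $U_{s_0} \subseteq U \cap V_{s_0}^-$ is immediate from the definition. For the opposite inclusion I would use the projection $\pi \colon V \to V_{s_0}^-$ along the complement $V_{s_0}^+$, which is well defined because $V = V_{s_0}^+ \oplus V_{s_0}^-$. The key observation is that $\pi(U_t) \subseteq U_{s_0}$ for every $t \in S$: when $t = s_0$, $\pi$ is the identity on $U_{s_0}$; when $m_{s_0 t} = 2$ and $t \neq s_0$, Lemma~\ref{lem-R}~\ref{lem-R-4} gives $U_t \subseteq V_{s_0}^+$, so $\pi(U_t) = 0$; when $m_{s_0 t} = 3$, computing $s_0 \cdot u$ in the decomposition $u = u^+ + u^-$ and using $s_0 \cdot u = u + f_{s_0 t}(u)$ yields $u^- = -\tfrac{1}{2} f_{s_0 t}(u)$, so $\pi(u) \in f_{s_0 t}(U_t) = U_{s_0}$ by Lemma~\ref{lem-cycle-wd}~\ref{lem-cycle-wd-2}. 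Now any $v \in U \cap V_{s_0}^-$ can be written as a finite sum $v = \sum_t u_t$ with $u_t \in U_t$, and since $v \in V_{s_0}^-$ we have $v = \pi(v) = \sum_t \pi(u_t) \in U_{s_0}$, as desired.

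The main obstacle I anticipated was part~(2): although the sum defining $U$ is visibly stable under the group action, it is a priori non-direct, so one cannot extract the $U_{s_0}$-component from a generic expression $v = \sum u_t$ by simply reading off the $t = s_0$ summand. The projection trick circumvents this entirely by exploiting that the ``transfer'' $f_{s_0 t}$ for a neighbour $t$ of $s_0$ is precisely (up to a harmless factor) the $V_{s_0}^-$-component of any vector in $V_t^-$, which is exactly the content of Lemma~\ref{lem-cycle-wd}~\ref{lem-cycle-wd-2} once one observes $s_0 \cdot u = u + f_{s_0 t}(u)$. I would not expect any reliance on the cycle structure of $G$ in this proof, since that information has already been absorbed into the well-definedness of the $U_t$.
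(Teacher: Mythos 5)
Your proposal is correct. Part (1) is exactly the paper's argument: the same three-way case split and the same appeal to Lemma \ref{lem-R} \ref{lem-R-4} and Lemma \ref{lem-cycle-wd} \ref{lem-cycle-wd-2}. For part (2) you take a slightly different route. The paper extends a basis of $U_{s_0}$ to a basis of $U$ whose extra vectors each lie in some $U_t$ with $t \ne s_0$, applies $s_0$ to a general $v \in U \cap V_{s_0}^-$, and compares coefficients to force the extra coordinates to vanish. You instead work coordinate-free with the projection $\pi \colon V \to V_{s_0}^-$ along $V_{s_0}^+$ and show $\pi(U_t) \subseteq U_{s_0}$ for every $t$, the key computation being $u^- = -\tfrac{1}{2} f_{s_0 t}(u)$ for $u \in V_t^-$ when $m_{s_0 t} = 3$, which reduces the claim to $f_{s_0 t}(U_t) = U_{s_0}$ from Lemma \ref{lem-cycle-wd} \ref{lem-cycle-wd-2} --- the same fact the paper leans on. The two arguments are the same idea in different packaging; yours has the minor advantages of avoiding any basis extension (harmless but slightly delicate when $U$ is a priori infinite dimensional) and of isolating the reusable statement $\pi(U) \subseteq U_{s_0}$, while the paper's version stays closer to the explicit eigenvalue bookkeeping it uses elsewhere (e.g.\ in Corollary \ref{cor-tree}). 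You are also right that the cycle structure plays no role here beyond the well-definedness of the $U_t$, which is Lemma \ref{lem-cycle-wd} \ref{lem-cycle-wd-1}.
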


\begin{proof}
  The proof is similar to that of Lemma \ref{lem-tree-subrep}\eqref{lem-tree-subrep-2} and Corollary  \ref{cor-tree}.

  To prove \eqref{lem-cycle-1}, it suffices to show $s \cdot v \in U$ for any $s,t \in S$ and $v \in U_t$.
  Note that $U_t \subseteq V_t^-$.
  Thus, if $s = t$ or if $m_{st} = 2$, then the assertion is clear by the definition of $V_t^-$ and Lemma \ref{lem-R}\eqref{lem-R-4}.
  Now suppose $m_{st} = 3$.
  Then, we have
  \begin{equation*}
    s \cdot v = (s \cdot v - v) + v = f_{st} (v) + v.
  \end{equation*}
  By Lemma \ref{lem-cycle-wd}\eqref{lem-cycle-wd-2}, we have $f_{st} (v) + v \in U_s + U_t$.
  Thus, $s \cdot v \in U$.

  For \eqref{lem-cycle-2}, we choose a basis $\{\alpha_1, \dots, \alpha_k\}$ of $U_{s_0}$, and suppose
  \[\{\alpha_1, \dots, \alpha_k, \alpha_{k+1}, \dots, \alpha_l\}\]
  is a basis of $U$ such that for any index $i \ge k+1$, the vector $\alpha_i$ belongs to $U_t$ for some $t \in S\setminus \{s_0\}$.
  For such index $i$ and such element $t$, if $m_{ts_0} = 2$, then we have $s_0 \cdot \alpha_i = \alpha_i$ by Lemma \ref{lem-R}\eqref{lem-R-4}; if $m_{ts_0} = 3$, then we have
  \begin{equation*}
    s_0 \cdot \alpha_i = (s_0 \cdot \alpha_i -\alpha_i) + \alpha_i = f_{s_0 t} (\alpha_i) + \alpha_i.
  \end{equation*}
  Note that $f_{s_0 t} (\alpha_i) \in U_{s_0}$ by Lemma \ref{lem-cycle-wd}\eqref{lem-cycle-wd-2}.
  Thus, $f_{s_0 t} (\alpha_i)$ can be written as a linear combination of $\alpha_1, \dots, \alpha_k$.
  Suppose now $v \in U \cap V_{s_0}^-$.
  We write
  \begin{equation*}
    v = \sum_{1 \le i \le l} c_i \alpha_i, \quad c_i \in \bC.
  \end{equation*}
  Then
  \begin{align*}
    -v & = s_0 \cdot v \\
     & = -\sum_{1 \le i \le k} c_i \alpha_i + \sum_{k+1 \le i \le l} c_i \bigl(f_{s_0 t} (\alpha_i) + \alpha_i\bigr) \\
     & = \text{(some linear combination of $\alpha_1, \dots, \alpha_k$) } + \sum_{k+1 \le i \le l} c_i \alpha_i.
  \end{align*}
  Thus, we must have $c_{k+1} = \dots = c_l = 0$, and $v \in U_{s_0}$.
  This proves $U \cap V_{s_0}^- \subseteq U_{s_0}$.
  The inclusion $U \cap V_{s_0}^- \supseteq U_{s_0}$ is obvious.
\end{proof}

\begin{corollary} \label{cor-cycle}
  $\dim V_{s_0}^- = 1$.
\end{corollary}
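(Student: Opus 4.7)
The plan is to deduce Corollary \ref{cor-cycle} by combining Lemma \ref{lem-cycle} with the irreducibility of $V$ and basic facts about simple modules over $\bC[X^{\pm 1}]$. Specifically, I would first observe that $V_{s_0}^- \ne 0$ by Lemma \ref{lem-R} \ref{lem-R-6}, so that $V_{s_0}^-$ is a nonzero $\bC[X^{\pm 1}]$-module via the action of $X$. The key point is to show that $V_{s_0}^-$ is in fact a \emph{simple} $\bC[X^{\pm 1}]$-module.

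To see this, I would let $U_{s_0}$ be an arbitrary nonzero $\bC[X^{\pm 1}]$-submodule of $V_{s_0}^-$ and form the associated subspace $U = \sum_{t \in S} U_t$ as in Lemma \ref{lem-cycle}. By part \ref{lem-cycle-1} of that lemma, $U$ is a $W$-subrepresentation of $V$, and it is nonzero because $U \supseteq U_{s_0} \ne 0$. Since $V$ is irreducible as a $W$-representation, we conclude $U = V$. Then part \ref{lem-cycle-2} of Lemma \ref{lem-cycle} yields
\begin{equation*}
  U_{s_0} = U \cap V_{s_0}^- = V \cap V_{s_0}^- = V_{s_0}^-.
\end{equation*}
Hence every nonzero $\bC[X^{\pm 1}]$-submodule of $V_{s_0}^-$ is the whole space, so $V_{s_0}^-$ is a simple $\bC[X^{\pm 1}]$-module.

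Finally, I would invoke the classification of simple modules over the commutative ring $\bC[X^{\pm 1}]$: any simple module is cyclic, hence of the form $\bC[X^{\pm 1}]/\mathfrak{m}$ for some maximal ideal $\mathfrak{m}$; and since $\bC$ is algebraically closed, the maximal ideals of $\bC[X^{\pm 1}]$ are precisely $(X - \lambda)$ for $\lambda \in \bC^\times$, giving quotients of $\bC$-dimension $1$. Therefore $\dim V_{s_0}^- = 1$, as claimed. The only step that requires any thought is the reduction to simplicity of $V_{s_0}^-$ as a $\bC[X^{\pm 1}]$-module; once Lemma \ref{lem-cycle} is in hand this is immediate, and the dimension bound then follows from standard commutative algebra, so I do not expect a genuine obstacle here.
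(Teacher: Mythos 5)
Your proof is correct and follows essentially the same route as the paper: both rest on Lemma \ref{lem-cycle} together with the fact (via the Nullstellensatz) that every simple $\bC[X^{\pm 1}]$-module is one-dimensional. The only difference is presentational—you argue directly that $V_{s_0}^-$ is a simple $\bC[X^{\pm 1}]$-module, whereas the paper runs the same argument as a proof by contradiction starting from a proper submodule.
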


\begin{proof}
  By Hilbert's Nullstellensatz, any simple module of $\bC[X^{\pm 1}]$ is one dimensional.
  Therefore, if $\dim V_{s_0}^- > 1$, then there exists a proper $\bC[X^{\pm 1}]$-submodule in $V_{s_0}^-$, say, $U_{s_0} \subsetneq V_{s_0}^-$.
  Let $U \subseteq V$ be the subrepresentation of $W$ defined as in Lemma \ref{lem-cycle}.
  Then, Lemma \ref{lem-cycle} implies that $U \subsetneq V$ is a proper subrepresentation of $W$, contradicting the irreducibility of $V$.
  Thus, $\dim V_{s_0}^- = 1$.
\end{proof}


By Corollary \ref{cor-cycle} and Lemma \ref{lem-R}\eqref{lem-R-5}, we have $\dim V_s^- = 1$ for any $s \in S$.
The proof of Theorem \ref{thm-A1=R} is complete.

\subsection{Appendix: The construction of irreducible R-representations} \label{subsec-R}

As promised in Remark \ref{rmk-simply-laced}\eqref{rmk-simply-laced-1}, we present in this subsection the explicit construction in \cite{Hu23} of the irreducible R-representations in the cases where the simply laced Coxeter graph has no cycle or only one cycle.

If the Coxeter graph $G$ is a simply laced tree, then the only irreducible representation of $W$ of $\af$-function value $1$ is the geometric representation $V_\text{geom}$ (if $V_\text{geom}$ is irreducible) or the simple quotient of $V_\text{geom}$ (if  $V_\text{geom}$ is reducible).
Recall that if $V_\text{geom}$ is reducible, then $V_\text{geom}$ has a maximal subrepresentation $V_0$ on which the $W$-action is trivial, and the quotient $V_\text{geom} / V_0$ is irreducible.
See \cite[Ch.\ V, no.\ 7]{Bourbaki-Lie456} for more details.

Suppose now there is one and only one cycle in the Coxeter graph $G$ of an irreducible simply laced Coxeter group $(W,S)$.
Suppose moreover that $s_0, s_1, \dots, s_n$ are the vertices on the cycle in $G$ as in the sub-sub-section \ref{subsubsec-cycle}.
For any number $x \in \bC^\times$, we define a representation $(\tld{V}_x, \tld{\rho}_x)$ of $W$ as follows.
As a vector space, $\tld{V}_x := \bigoplus_{s \in S} \bC \alpha_s$ is spanned by a formal basis $\{\alpha_s \mid s \in S\}$.
The action $\tld{\rho}_x$ of $W$ on $\tld{V}_x$ is defined by:
\begin{enumerate}
  \item $s \cdot \alpha_s = - \alpha_s$, for any $s \in S$;
  \item $s_0 \cdot \alpha_{s_n} := \alpha_{s_n} + x \alpha_{s_0}$, and $s_n \cdot \alpha_{s_0} := \alpha_{s_0} + \frac{1}{x} \alpha_{s_n}$;
  \item for any other pairs $s,t \in S$ with $s \ne t$, that is, $\{s,t\} \ne \{s_0, s_n\}$, we define $s \cdot \alpha_t := \alpha_t$ if $m_{st} = 2$, and $s \cdot \alpha_t := \alpha_t + \alpha_s$ if $m_{st} = 3$.
\end{enumerate}
It can be verified that $(\tld{V}_x, \tld{\rho}_x)$ is a well defined representation of $W$.
Let $U_x := \{v \in \tld{V}_x \mid w \cdot v = v, \forall w \in W\}$ be the subrepresentation with trivial group action ($U_x$ is usually zero, but it is possible to be nonzero, e.g., when $W$ is of affine type $\tld{\sfA}_n$ and $x = 1$).
We define $(V_x, \rho_x) := (\tld{V}_x, \tld{\rho}_x) / U_x$ to be the quotient representation.
It turns out that the representations $\{(V_x, \rho_x) \mid x \in \bC^\times\}$ are pairwise non-isomorphic, and they are all the irreducible R-representations of $W$, or equivalently, all the irreducible representations of $W$ of $\af$-function value $1$  (up to isomorphism).
See \cite{Hu23} for more details of the construction.


\bibliographystyle{amsplain}
\bibliography{a-fun-one}

@book {BB05,
    AUTHOR = {Bj\"{o}rner, Anders and Brenti, Francesco},
     TITLE = {Combinatorics of {C}oxeter {G}roups},
    SERIES = {Graduate Texts in Mathematics},
    VOLUME = {231},
 PUBLISHER = {Springer},
   address = {New York},
      YEAR = {2005},
     PAGES = {xiv+363}}

@article {BCNS15,
    AUTHOR = {Bugaenko, Vadim and Cherniavsky, Yonah and Nagnibeda, Tatiana
              and Shwartz, Robert},
     TITLE = {Weighted {C}oxeter graphs and generalized geometric
              representations of {C}oxeter groups},
   JOURNAL = {Discrete Appl. Math.},
  FJOURNAL = {Discrete Applied Mathematics. The Journal of Combinatorial
              Algorithms, Informatics and Computational Sciences},
    VOLUME = {192},
      YEAR = {2015},
     PAGES = {17--27}}

@article {Belolipetsky04,
    AUTHOR = {Belolipetsky, Mikhail},
     TITLE = {Cells and representations of right-angled {C}oxeter groups},
   JOURNAL = {Selecta Math. (N.S.)},
  FJOURNAL = {Selecta Mathematica. New Series},
    VOLUME = {10},
      YEAR = {2004},
    NUMBER = {3},
     PAGES = {325--339}}

@book {Bourbaki-Lie456,
    AUTHOR = {Bourbaki, Nicolas},
    TITLE = {{Lie Groups and Lie Algebras. Chapters 4--6}},
    SERIES = {Elements of Mathematics},
    NOTE = {Translated from the 1968 French original by Andrew Pressley},
    PUBLISHER = {Springer-Verlag},
    address={Berlin},
    YEAR = {2002},
    PAGES = {xii+300}}

@book {Bonnafe,
    AUTHOR = {Bonnaf\'{e}, C\'{e}dric},
     TITLE = {Kazhdan-{L}usztig cells with unequal parameters},
    SERIES = {Algebra and Applications},
    VOLUME = {24},
 PUBLISHER = {Springer},
   address = {Cham},
      YEAR = {2017},
     PAGES = {xxv+348}}

@unpublished{Chen25,
    author = {Chen, Xiaoyu and Hu, Hongsheng},
    title = {Boundedness of {L}usztig's $\boldsymbol{a}$-function for {C}oxeter groups of finite rank},
    note = {preprint, arXiv:2503.06432},
    year = {2025}}

@article {Donnelly11,
    AUTHOR = {Donnelly, Robert G.},
     TITLE = {Root systems for asymmetric geometric representations of
              {C}oxeter groups},
   JOURNAL = {Comm. Algebra},
  FJOURNAL = {Communications in Algebra},
    VOLUME = {39},
      YEAR = {2011},
    NUMBER = {4},
     PAGES = {1298--1314}}

@article {Douglass90,
    AUTHOR = {Douglass, J. Matthew},
     TITLE = {Cells and the reflection representation of {W}eyl groups and
              {H}ecke algebras},
   JOURNAL = {Trans. Amer. Math. Soc.},
  FJOURNAL = {Transactions of the American Mathematical Society},
    VOLUME = {318},
      YEAR = {1990},
    NUMBER = {1},
     PAGES = {373--399}}

@article {DPWX22,
    AUTHOR = {Dimitrov, Ivan and Paquette, Charles and Wehlau, David and Xu,
              Tianyuan},
     TITLE = {Subregular {$J$}-rings of {C}oxeter systems via quiver path
              algebras},
   JOURNAL = {J. Algebra},
  FJOURNAL = {Journal of Algebra},
    VOLUME = {612},
      YEAR = {2022},
     PAGES = {526--576}
}

@article {EW14,
    AUTHOR = {Elias, Ben and Williamson, Geordie},
    TITLE = {The {H}odge theory of {S}oergel bimodules},
    JOURNAL = {Ann. of Math. (2)},
    FJOURNAL = {Annals of Mathematics. Second Series},
    VOLUME = {180},
    YEAR = {2014},
    NUMBER = {3},
    PAGES = {1089--1136}}

@article {Hee91,
    AUTHOR = {H\'{e}e, Jean-Yves},
     TITLE = {Syst\`eme de racines sur un anneau commutatif totalement
              ordonn\'{e}},
   JOURNAL = {Geom. Dedicata},
  FJOURNAL = {Geometriae Dedicata},
    VOLUME = {37},
      YEAR = {1991},
    NUMBER = {1},
     PAGES = {65--102}}

@article {Hu22,
    AUTHOR = {Hu, Hongsheng},
     TITLE = {Some Infinite-Dimensional Representations of Certain
              {C}oxeter Groups},
   JOURNAL = {Ann. Comb.},
  FJOURNAL = {Annals of Combinatorics},
    VOLUME = {29},
      YEAR = {2025},
    NUMBER = {1},
     PAGES = {101--115}}

@article {Hu23,
    AUTHOR = {Hu, Hongsheng},
     TITLE = {Reflection representations of {C}oxeter groups and homology of
              {C}oxeter graphs},
   JOURNAL = {Algebr. Represent. Theory},
  FJOURNAL = {Algebras and Representation Theory},
    VOLUME = {27},
      YEAR = {2024},
    NUMBER = {1},
     PAGES = {961--994}}

@article {Krammer09,
    AUTHOR = {Krammer, Daan},
     TITLE = {The conjugacy problem for {C}oxeter groups},
   JOURNAL = {Groups Geom. Dyn.},
  FJOURNAL = {Groups, Geometry, and Dynamics},
    VOLUME = {3},
      YEAR = {2009},
    NUMBER = {1},
     PAGES = {71--171}}

@article {KL79,
    AUTHOR = {Kazhdan, David and Lusztig, George},
    TITLE = {Representations of {C}oxeter groups and {H}ecke algebras},
    JOURNAL = {Invent. Math.},
    FJOURNAL = {Inventiones Mathematicae},
    VOLUME = {53},
    YEAR = {1979},
    NUMBER = {2},
    PAGES = {165--184}}

@article {LS19,
    AUTHOR = {Li, Yan and Shi, Jian-Yi},
     TITLE = {The boundedness of a weighted {C}oxeter group with
              non-3-edge-labeling graph},
   JOURNAL = {J. Algebra Appl.},
  FJOURNAL = {Journal of Algebra and its Applications},
    VOLUME = {18},
      YEAR = {2019},
    NUMBER = {5},
     PAGES = {1950085, 43}}

@article {Lusztig83-intrep,
    AUTHOR = {Lusztig, George},
    TITLE = {Some examples of square integrable representations of semisimple {$p$}-adic groups},
   JOURNAL = {Trans. Amer. Math. Soc.},
   FJOURNAL = {Transactions of the American Mathematical Society},
   VOLUME = {277},
   YEAR = {1983},
   NUMBER = {2},
   PAGES = {623--653}}

@incollection {Lusztig85-cell-i,
    AUTHOR = {Lusztig, George},
    TITLE = {Cells in affine {W}eyl groups},
    BOOKTITLE = {Algebraic {G}roups and {R}elated {T}opics ({K}yoto/{N}agoya, 1983)},
    editor={Hotta, R.},
    SERIES = {Adv. Stud. Pure Math.},
    VOLUME = {6},
    PAGES = {255--287},
    PUBLISHER = {North-Holland},
    address={Amsterdam},
    YEAR = {1985}}

@article {Lusztig87-cell-ii,
    AUTHOR = {Lusztig, George},
    TITLE = {Cells in affine {W}eyl groups. {II}},
    JOURNAL = {J. Algebra},
    FJOURNAL = {Journal of Algebra},
    VOLUME = {109},
    YEAR = {1987},
    NUMBER = {2},
    PAGES = {536--548}}

@article {Lusztig87-cell-iii,
    AUTHOR = {Lusztig, George},
     TITLE = {Cells in affine {W}eyl groups. {III}},
   JOURNAL = {J. Fac. Sci. Univ. Tokyo Sect. IA Math.},
  FJOURNAL = {Journal of the Faculty of Science. University of Tokyo.
              Section IA. Mathematics},
    VOLUME = {34},
      YEAR = {1987},
    NUMBER = {2},
     PAGES = {223--243}}

@unpublished{Lusztig14-hecke-unequal,
    author={Lusztig, George},
    title={Hecke algebras with unequal parameters},
    note={{arXiv}:0208154v2. Revised version of \emph{Hecke Algebras with Unequal Parameters}, 2003, CRM Monograph Series 18, Amer. Math. Soc., Providence, RI},
    year={2014}}

@article {Matsumoto64,
    AUTHOR = {Matsumoto, Hideya},
     TITLE = {G\'{e}n\'{e}rateurs et relations des groupes de {W}eyl g\'{e}n\'{e}ralis\'{e}s},
   JOURNAL = {C. R. Acad. Sci. Paris},
  FJOURNAL = {Comptes Rendus Hebdomadaires des S\'{e}ances de l'Acad\'{e}mie des
              Sciences},
    VOLUME = {258},
      YEAR = {1964},
     PAGES = {3419--3422},
      ISSN = {0001-4036}}

@book {Serre77,
    AUTHOR = {Serre, Jean-Pierre},
     TITLE = {Linear representations of finite groups},
    SERIES = {Graduate Texts in Mathematics, Vol. 42},
      NOTE = {Translated from the second French edition by Leonard L. Scott},
 PUBLISHER = {Springer-Verlag, New York-Heidelberg},
      YEAR = {1977},
     PAGES = {x+170}}

@article {Shi87,
    AUTHOR = {Shi, Jian-Yi},
     TITLE = {A two-sided cell in an affine {W}eyl group},
   JOURNAL = {J. London Math. Soc. (2)},
  FJOURNAL = {Journal of the London Mathematical Society. Second Series},
    VOLUME = {36},
      YEAR = {1987},
    NUMBER = {3},
     PAGES = {407--420}}

@incollection {Tits69,
    AUTHOR = {Tits, Jacques},
     TITLE = {Le probl\`eme des mots dans les groupes de {C}oxeter},
 BOOKTITLE = {Symposia {M}athematica ({INDAM}, {R}ome, 1967/68), {V}ol. 1},
     PAGES = {175--185},
 PUBLISHER = {Academic Press, London},
      YEAR = {1969}}

@Article{Vinberg71,
 Author = {Vinberg, \`Ernest Borisovich},
 Title = {Discrete linear groups generated by reflections},
 FJournal = {Mathematics of the USSR. Izvestiya},
 Journal = {Math. USSR, Izv.},
 ISSN = {0025-5726},
 Volume = {5},
 Pages = {1083--1119},
 Year = {1971}}

@book {Xi94,
    AUTHOR = {Xi, Nanhua},
     TITLE = {Representations of affine {H}ecke algebras},
    SERIES = {Lecture Notes in Mathematics},
    VOLUME = {1587},
 PUBLISHER = {Springer-Verlag, Berlin},
      YEAR = {1994},
     PAGES = {viii+137}}

@article {Xi12,
    AUTHOR = {Xi, Nanhua},
    TITLE = {Lusztig's {$A$}-function for {C}oxeter groups with complete graphs},
    JOURNAL = {Bull. Inst. Math. Acad. Sin. (N.S.)},
    FJOURNAL = {Bulletin of the Institute of Mathematics. Academia Sinica. New Series},
    VOLUME = {7},
    YEAR = {2012},
    NUMBER = {1},
    PAGES = {71--90}}

@article {Xie17,
    AUTHOR = {Xie, Xun},
     TITLE = {The lowest two-sided cell of a {C}oxeter group with complete
              graph},
   JOURNAL = {J. Algebra},
  FJOURNAL = {Journal of Algebra},
    VOLUME = {489},
      YEAR = {2017},
     PAGES = {38--58}}

@article {Xu19,
    AUTHOR = {Xu, Tianyuan},
     TITLE = {On the subregular {$J$}-rings of {C}oxeter systems},
   JOURNAL = {Algebr. Represent. Theory},
  FJOURNAL = {Algebras and Representation Theory},
    VOLUME = {22},
      YEAR = {2019},
    NUMBER = {6},
     PAGES = {1479--1512}}

@article {Zhou13,
    AUTHOR = {Zhou, Peipei},
     TITLE = {Lusztig's {$a$}-function for {C}oxeter groups of rank 3},
   JOURNAL = {J. Algebra},
  FJOURNAL = {Journal of Algebra},
    VOLUME = {384},
      YEAR = {2013},
     PAGES = {169--193}}

\end{document}